\newcommand{\TheTitle}{Duality of sum of nonnegative circuit polynomials and optimal SONC bounds}
\newcommand{\TheRunningTitle}{SONC duality and circuit generation}
\newcommand{\TheAuthors}{D{\'a}vid Papp}
\headers{\TheRunningTitle}{\TheAuthors}
\title{{\TheTitle}\thanks{Original manuscript: \today.
\funding{This material is based upon work supported by the National Science Foundation under Grant No.~DMS-1719828 and Grant No.~DMS-1847865.}}}
\author{\TheAuthors\thanks{North Carolina State University, Department of Mathematics. E-mail: \texttt{dpapp@ncsu.edu}. ORCID: 0000-0003-4498-6417}}
\newcommand{\deletethis}[1]{{}}
\newcommand{\bibhack}[1]{}
\numberwithin{theorem}{section}
\theoremstyle{remark}
\newtheorem{example}[theorem]{Example}
\newtheorem{remark}[theorem]{Remark}
\newcommand{\vA}{\mathbf{A}}
\newcommand{\vaf}{{\bm{\alpha}}}
\newcommand{\vbeta}{{\bm{\beta}}}
\newcommand{\vgamma}{{\bm{\gamma}}}
\newcommand{\veta}{{\bm{\eta}}}
\newcommand{\vomega}{{\bm{\omega}}}
\newcommand{\vc}{\mathbf{c}}
\newcommand{\ve}{\mathbf{e}}
\newcommand{\vf}{\mathbf{f}}
\newcommand{\vlam}{{\bm{\lambda}}}
\newcommand{\vone}{\mathbf{1}}
\newcommand{\vv}{\mathbf{v}}
\newcommand{\vw}{\mathbf{w}}
\newcommand{\vx}{\mathbf{x}}
\newcommand{\vxi}{{\bm{\xi}}}
\newcommand{\vy}{\mathbf{y}}
\newcommand{\vz}{\mathbf{z}}
\newcommand{\vzero}{\mathbf{0}}
\newcommand{\CC}{\mathcal{C}}
\newcommand{\defeq}{\ensuremath{\overset{\textup{def}}{=}}}
\newcommand{\N}{\mathbb{N}}
\newcommand{\R}{\mathbb{R}}
\renewcommand{\SS}{\mathcal{S}}
\newcommand{\T}{\mathrm{T}}
\newcommand{\conv}{\operatorname{conv}}
\newcommand{\fai}{\ensuremath{f_{\vaf_i}}}
\newcommand{\New}{\operatorname{New}}
\newcommand{\cP}{\mathcal{P}}
\newcommand{\supp}{\operatorname{supp}}
\begin{document}

\maketitle

\begin{abstract}
Circuit polynomials are polynomials satisfying a number of conditions that make it easy to compute sharp and certifiable global lower bounds for them. Consequently, one may use them to find certifiable lower bounds for any polynomial by writing it as a sum of circuit polynomials with known lower bounds (if possible), in a fashion similar to the better-known sum-of-squares polynomials. Seidler and de Wolff recently showed that sums of nonnegative circuit polynomials (or SONC polynomials for short) can be used to compute global lower bounds (called SONC bounds) for polynomials in this manner in polynomial time, as long as the polynomial is bounded from below and its support satisfies a certain nondegeneracy assumption. The quality of the SONC bound depends on the circuits used in the computation, but finding the set of circuits that yield the best attainable SONC bound among the astronomical number of candidate circuits is a non-trivial task that has not been addressed so far. In this paper we propose an efficient method to compute the optimal SONC lower bound by iteratively identifying the optimal circuits to use in the SONC bounding process. The method is based on a new proof of a recent result by Wang which states that (under the same nondegeneracy assumption) every SONC polynomial decomposes into SONC polynomials on the same support. Our proof, based on convex programming duality, removes the nondegeneracy assumption in Wang's result and motivates a column generation approach that generates an optimal set of circuits and computes the corresponding SONC bound in a manner that is particularly attractive for sparse polynomials of high degree and with a large number of unknowns. The method is implemented and tested on a large set of sparse polynomial optimization problems with up to 40 unknowns, of degree up to 60, and up to 3000 monomials in the support. The results indicate that the method is efficient in practice, requiring only a small number of iterations to identify the optimal circuits, with running times well under a minute for most of the instances and under 1.5 hours for the largest ones. Somewhat surprisingly, in the first set of the instances considered, the best SONC bound was very close to the best local minimum found using multi-start local minimization, showing both that the best local minima are close to global, and that the best attainable SONC lower bound is close to the best attainable lower bound.
\end{abstract}

\section{Introduction}

Polynomial optimization, that is, computing the infimum of a polynomial over a basic closed semialgebraic set is a fundamental computational problem in algebraic geometry with a wide range of applications such as discrete geometry \cite{BachocVallentin2008, BallingerBlekhermanCohnGiansiracusaKellySchurmann2009}, nonlinear dynamical systems \cite{GoluskinFantuzzi2019, KuntzThomasStanBarahona2019}, control \cite{HenrionGarulli2005,AylwardItaniParrilo2007,AhmadiMajumdar2016,DeitsTedrake2015}, extremal combinatorics \cite{RaymondSinghThomas2015, BarakEtal2019}, power systems engineering \cite{JoszMaeghtPanciaticiGilbert2015,GhaddarMarecekMevissen2016}, and statistics \cite{Papp2012}, to name a few. It is well-known to be an intractable problem; its difficulty stems from the computational complexity of deciding whether a given polynomial is nonnegative (either over $\R^n$ or over a semialgebraic set given by a list of polynomial inequalities) \cite{DickinsonGijben2014, BlekhermanParriloThomas2013}. The same problem, coupled with the additional, even more challenging, task of finding a rigorous certificate of nonnegativity (that is verifiable in polynomial time in exact arithmetic) is also a central question in symbolic computation and automated theorem proving \cite{Harrison2007}.

Practically scalable approaches to polynomial optimization rely on tractable approximations of cones of nonnegative polynomials. \emph{Inner} approximations based on easily verifiable sufficient conditions of nonnegativity are particularly desirable, as they can yield certificates of nonnegativity or rigorous lower bounds on the infimum, even if one can only compute approximately optimal (but feasible) numerical solutions to the optimization problems solved in the process of generating rigorous certificates (e.g., in hybrid symbolic-numerical methods). Undoubtedly, the most successful of these approximations to date has been sum-of-squares (SOS) cones, which date back to at least the early 2000s (see \cite{Nesterov2000, ParriloThesis2000, Lasserre2001}, and even the earlier work of Shor \cite{Shor1987}) and have given rise to polynomial optimization software such as GloptiPoly~3 \cite{gloptipoly} and SOSTOOLS \cite{sostools}.

More recently, a number of alternatives to SOS have been proposed to address difficulties often encountered when using SOS techniques for polynomials with either a large number of unknowns or a high degree. Alternatives such as DSOS and SDSOS polynomials \cite{AhmadiMajumdar2014, KuangEtal2017} are more tractable inner approximations of the SOS cones (using linear or second order cone programming in place of semidefinite programming); Ghasemi and Marshall suggests an approach using geometric programming \cite{GhasemiMarshall2012, GhasemiMarshall2013}. Other alternatives, such as sums of nonnegative (SONC) polynomials \cite{IlimanDeWolff2016} and sums of AM/GM exponentials (SAGE) \cite{ChandrasekaranShah2016} are different subcones of nonnegative polynomials that neither contain SOS cones nor are they contained by them, and thus, in principle have the potential to provide better bounds than SOS while being faster than SOS \cite{SeidlerDeWolff2018}. In this work, we focus on SONC polynomials, specifically on the problem of computing optimal SONC lower bounds efficiently.

The contributions of this paper are the following. First, in Section \ref{sec:prelim}, we provide a new conic optimization formulation for determining whether a polynomial is SONC; this formulation is somewhat smaller and simpler than the relative entropy programming formulation used in previous work on SONC polynomials. Using this formulation and convex programming duality, we prove in Section \ref{sec:support} that every SONC polynomial $f$ can be written as a sum of nonnegative circuit polynomials supported on the support of $f$ and a sum of monomial squares. This was also shown recently under mild conditions by Wang \cite{Wang2019}, using different methods. In Section \ref{sec:algorithm} we propose an algorithm, motivated by our proof of this result, to iteratively identify the circuits that appear in the optimal SONC decomposition. An implementation of this approach is discussed in Sections \ref{sec:implementation} and \ref{sec:numericalexperiments}, where we demonstrate that the approach can be used to find the optimal SONC lower bound on sparse polynomials with up to 3000 monomials in minutes. We conclude with a discussion regarding possible extensions and open questions in \mbox{Section \ref{sec:discussion}}.

\section{Preliminaries}\label{sec:prelim}

Recall the following notation and definitions. For vectors $\vz$ and $\vaf$ of dimension $n$, $\vz^\vaf$ is a shorthand for the monomial $\prod_{i=1}^n z_i^{\alpha_i}$. (Contrary to convention, we shall use $\vz$ to denote the unknowns of polynomials in spite of the unknowns being real, in order to avoid any confusion with the primal variables $\vx$ of our main optimization model.) For an $n$-variate polynomial $f$ given by $f(\vz) = \sum_{\vaf\in\supp(f)} f_{\vaf} \vz^{\vaf}$, the set of exponents $\supp(f)$ is called the \emph{support} of $f$. The \emph{Newton polytope} of $f$ is $\New(f) \defeq \conv(\supp(f))$, the closed convex hull of the support. A polynomial is a \emph{monomial square} if it can be written as $c \vz^\vaf$ with $c>0$ and $\vaf\in(2\N)^n$.

Following \cite{IlimanDeWolff2016}, we say that a polynomial $f$ is a \emph{circuit polynomial} if its support can be written as $\supp(f) = \{\vaf_1,\dots,\vaf_r,\vbeta\}$ such that the set $\{\vaf_1,\dots,\vaf_r\}$ is affinely independent and $\vbeta = \sum_{i=1}^r \lambda_i \vaf_i$ with some $\lambda_i>0$ satisfying $\sum_{i=1}^r \lambda_i = 1$. In other words, the scalars $\lambda_i$ are barycentric coordinates of the exponent $\vbeta$, which lies in the convex hull of the $\vaf_i$. The affine independence condition on the exponents implies that the barycentric coordinates $\lambda_i$ are unique and strictly positive.

The support set of a circuit polynomial is called a \emph{circuit}. The exponent $\vbeta$ is referred to as the \emph{inner exponent} of the circuit, while the $\vaf_i$ are the \emph{outer exponents}. Given a circuit $C$, $NC(C)$ denotes the set of nonnegative circuit polynomials supported on $C$. The vector of barycentric coordinates of the inner exponent is denoted by $\vlam(C)$.

Our starting point is the well-known characterization of nonnegative circuit polynomials \cite{IlimanDeWolff2016}:
\begin{proposition}\label{thm:sonc-main}
Let $f$ be an $n$-variate circuit polynomial satisfying $f(\vz) = \sum_{i=1}^r \fai \vz^{\vaf_i} + f_{\vbeta} \vz^{\vbeta}$ for some real coefficients $\fai$ and $f_\vbeta$ and suppose that $\vbeta = \sum_{i=1}^r \lambda_i \vaf_i$ with some $\lambda_i>0$ satisfying $\sum_{i=1}^r \lambda_i = 1$. Then $f$ is nonnegative if and only if $\vaf_i \in (2\N)^n$ and $\fai > 0$ for each $i$, and at least one of the following two alternatives holds:
\begin{enumerate}
\item $\vbeta \in (2\N)^n$ and $f_{\vbeta} \geq 0$, or
\item $|f_{\vbeta}| \leq \prod_{i=1}^r \left(\frac{\fai}{\lambda_i}\right)^{\lambda_i}$.
\end{enumerate}
\end{proposition}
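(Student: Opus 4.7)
\emph{Plan.} The argument splits naturally into necessity and sufficiency, with the weighted AM-GM inequality doing the essential work on both sides. First, I would dispatch the easy half of necessity: since the $\vaf_i$ are affinely independent and all barycentric coordinates $\lambda_i$ are strictly positive, $\vbeta$ lies in the relative interior of $\conv(\vaf_1,\dots,\vaf_r)$, so each $\vaf_i$ is a vertex of $\New(f)$. A standard Newton-vertex substitution then yields $\fai>0$ and $\vaf_i\in(2\N)^n$: pick a linear functional $\ell$ strictly maximized on $\vaf_i$ over the rest of $\supp(f)$, substitute $z_j = s_j t^{\ell(\ve_j)}$ with $s_j\in\{\pm 1\}$ and $t\to\infty$, and observe $t^{-\ell(\vaf_i)}f(\vz)\to\fai\prod_j s_j^{(\vaf_i)_j}$; nonnegativity then rules out both the wrong sign of $\fai$ and any odd entry of $\vaf_i$.

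\emph{Sufficiency.} Case (1) is immediate, as $f$ is then a sum of monomial squares. For case (2) and $\vz$ with all coordinates nonzero, I would apply weighted AM-GM to the nonnegative quantities $\frac{\fai}{\lambda_i}\vz^{\vaf_i}$ (nonnegative by the previous step) with weights $\lambda_i$:
\[
\sum_{i=1}^r \fai \vz^{\vaf_i}
\;\geq\; \prod_{i=1}^r \left(\frac{\fai}{\lambda_i}\right)^{\lambda_i} \bigl(\vz^{\vaf_i}\bigr)^{\lambda_i}
\;=\; \prod_{i=1}^r \left(\frac{\fai}{\lambda_i}\right)^{\lambda_i} |\vz^\vbeta|,
\]
using $\sum_i \lambda_i\vaf_i = \vbeta$ and $\vz^{\vaf_i} = |\vz|^{\vaf_i}$ whenever $\vaf_i$ is even. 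Combining with the hypothesized bound and with $|f_\vbeta||\vz^\vbeta|\geq -f_\vbeta\vz^\vbeta$ yields $f(\vz)\geq 0$ off the coordinate hyperplanes, and continuity extends the bound everywhere.

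\emph{Necessity of the coefficient bound, and the main obstacle.} Assuming (1) fails and $|f_\vbeta|>\prod(\fai/\lambda_i)^{\lambda_i}$, I would exhibit $\vz$ with $f(\vz)<0$ by forcing equality in the AM-GM step above. Equality requires $\frac{\fai}{\lambda_i}|\vz|^{\vaf_i}$ to be independent of $i$; taking logs produces a system of $r-1$ linear equations in $\log|z_1|,\dots,\log|z_n|$ whose coefficient rows $\vaf_i-\vaf_1$ ($i\geq 2$) are linearly independent by affine independence of the $\vaf_i$. Since $r-1\leq n$, this underdetermined system admits a solution, giving a positive $|\vz|$ at which AM-GM is tight. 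Finally, if $\vbeta\notin(2\N)^n$ I would flip the sign of some $z_j$ with $\beta_j$ odd so that $\vz^\vbeta$ and $f_\vbeta$ have opposite signs; otherwise the failure of (1) forces $f_\vbeta<0$ while $\vz^\vbeta\geq 0$ automatically. In either case $f(\vz) = \bigl(\prod(\fai/\lambda_i)^{\lambda_i} - |f_\vbeta|\bigr)|\vz^\vbeta| < 0$. The only mildly subtle point is feasibility of the AM-GM equality system, and this reduces to precisely the affine independence assumption on the $\vaf_i$.
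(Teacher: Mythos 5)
The paper states this proposition as a known result quoted from Iliman and de Wolff and gives no proof of its own, so there is no internal argument to compare against. Your proof is correct and is essentially the standard one for this characterization: the vertex substitution $z_j = s_j t^{\ell(\ve_j)}$ forces evenness and positivity of the outer terms, weighted AM--GM with weights $\lambda_i$ gives sufficiency of the circuit-number bound, and solving the (full-rank, by affine independence) log-linear equality system plus a sign flip on an odd coordinate of $\vbeta$ gives necessity. All steps, including the feasibility of the AM--GM equality system, check out.
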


It has been shown in \cite{DresslerIlimanDeWolff2017} that the second alternative in Proposition \ref{thm:sonc-main} is convex in the coefficients of $f$, moreover, it can be represented using $O(r)$ number of affine and relative entropy cone constraints. In this work, we use conic constraints involving the generalized power cone and its dual to represent nonnegative circuit polynomials, which has the advantage of requiring only one cone constraint per circuit.

The \emph{(generalized) power cone} with \emph{signature} $\vlam = (\lambda_1,\dots,\lambda_r)$ is the convex cone defined as
\begin{equation}\label{eq:power-cone-def}
    \cP_\vlam \defeq \left\{ (\vv,z)\in \R_+^r \times \R \,\middle|\, |z|\leq \vv^\vlam \right\}.
\end{equation}
It can be shown that $\cP_\vlam$ is a proper (closed, pointed, full-dimensional) convex cone for every $\vlam\in ]0,1[^r$, and that its dual cone (with respect to the standard inner product) is the following \cite{Chares2009}:
\[ \cP^*_\vlam \defeq \left\{ (\vv,z)\in \R_+^r \times \R \,\middle|\, |z|\leq \prod_{i=1}^r\left(\frac{v_i}{\lambda_i}\right)^{\lambda_i} \right\}. \]
This means that the second alternative in Proposition \ref{thm:sonc-main} can be written simply as a single cone constraint (and without additional auxiliary variables):
\begin{equation}\label{eq:NC}
 |f_{\vbeta}| \leq \prod_{i=1}^r \left(\frac{\fai}{\lambda_i}\right)^{\lambda_i}\; \Longleftrightarrow \; \big((f_{\vaf_1},\dots,f_{\vaf_r}),f_{\vbeta}\big) \in \cP^*_\vlam.
\end{equation}
Note that the cone depends on the circuit $C = \{\vaf_1,\dots,\vaf_r,\vbeta\}$ only through its signature $\vlam(C)$.

We say that a polynomial is a \emph{sum of nonnegative circuit polynomials}, or \emph{SONC} for short, if it can be written as a sum of monomial squares and nonnegative circuit polynomials. SONC polynomials are obviously nonnegative by definition. Since the nonnegativity of a circuit polynomial can be easily verified using Proposition \ref{thm:sonc-main}, the nonnegativity of a SONC polynomial can be certified by providing an explicit representation of the polynomial as a sum of monomial squares and nonnegative circuit polynomials. Such a certificate is called a \emph{SONC decomposition}. As long as the number of circuits is sufficiently small, a SONC decomposition can be verified efficiently. From (the conic version of) Carath{\'e}odory's theorem \cite[Corollary 17.1.2]{Rockafellar1970} it is clear that every SONC polynomial $f$ can be written as a sum of at most $|\supp(f)|$ nonnegative circuit polynomials, therefore, a ``short'' SONC decomposition exists. However, the number of circuits supported on the Newton polytope of a polynomial can be astronomical even for polynomials with a relatively small support set (see also Example \ref{ex:many_circuits}), and it is not clear which of these circuits will be needed in a SONC decomposition. This motivates the search for algorithms that can identify the relevant circuits and compute short SONC decompositions.

Suppose we are given a polynomial $f(\vz) = \sum_{\vaf\in\supp(f)} f_\vaf\vz^\vaf$ by its support and its coefficients in the monomial basis, and that we are given a set of circuits $\CC = \{C^1,\dots,C^N\}$. We shall assume, without loss of generality, that $\vzero \in \supp(f)$ and that $\supp(f) \subseteq \bigcup_{j=1}^N C^j$.

Let $\SS(\CC)$ be the set of polynomials that can be written as a sum of nonnegative circuit polynomials whose support is a circuit belonging to $\CC$ and monomial squares supported on $\supp(f)$. Using \mbox{Proposition \ref{thm:sonc-main}} and Equation \eqref{eq:NC}, one may see that deciding whether $f$ belongs to $\SS(\CC)$ amounts to solving a conic optimization (feasibility) problem. 
We shall give the details of this optimization problem next. For theoretical reasons that will become clear later, we formulate this feasibility problem as a slightly more complicated optimization problem than what may appear necessary, with a carefully chosen objective function. As we shall see later in this section (\mbox{Lemma \ref{thm:strong-duality}}), this form guarantees that strong duality holds for this representation, with attainment in both the primal and the dual.

\deletethis{
Let $\vf$ denote the vector $(f_\vaf)_{\vaf\in\supp(f)}$ For each circuit $C^j$, denote the exponent vectors of its outer monomials by $\vaf^j_1,\dots,\vaf^j_{r_j}$ and the exponent vector of its inner monomial by $\vbeta^j$. Let $\vlam^j$ be the unique vector satisfying $\vbeta^j = \sum_{i=1}^{r_j}\lambda^j_i\vaf^j_i$. Finally, let $\vA^j \in \{0,1\}^{|\supp(f)|\times (r_j+1)}$ be the matrix whose $(k,\ell)$-th element is $1$ if the $k$-th element of $\supp(f)$ is the $\ell$-th element of the support of $C^j$, and $0$ otherwise. Now, it is immediate that $f$ has a SONC decomposition using the circuits $C^1,\dots,C^N$ if and only if the optimization (feasibility) problem below (whose decision variables $\vz_j \in \R^{r_j+1}$ $(j=1,\dots,N)$ are interpreted as the coefficient vectors of the nonnegative circuit polynomials we are seeking) has a solution:
\begin{equation}\label{eq:power}
\begin{alignedat}{2}
\underset{\vz_1,\dots,\vz_N}{\text{minimize}}\;\;\;\, & 0 \\
\text{subject to}\;\; & \sum_{j=1}^N \vA^j\vz_j = \vf\\
                      & \vz_j \in \cP^*_{\vlam^j} \quad j=1,\dots,N.
\end{alignedat}
\end{equation}

It is straightforward to extend \eqref{eq:power} to also include monomial squares in the decomposition using no more than $\supp(f)$ additional decision variables (or, equivalently, by simply changing the equality constraints corresponding to exponent vectors $\vaf\in(2\N)^n$) to a ``$\leq$'' inequality constraint, but for notational clarity this is left out of the model above.
}

Let $V$ be the vertices of $\New(f)$, and consider the following optimization problem, whose decision variables are indexed by $V$:
\begin{equation}\label{eq:existence}
\begin{alignedat}{2}
\underset{\vgamma\in\R_+^V}{\text{minimize}}\;\;\;\, & \sum_{\vaf\in V}\gamma_\vaf \\
\text{subject to}\;\; & (\vz\mapsto f(\vz) + \sum_{\vaf\in V}\gamma_\vaf \vz^\vaf) \in \SS(\CC).
\end{alignedat}
\end{equation}

It is immediate that $f$ has a desired SONC decomposition if and only if the optimal objective function value of this problem is $0$ and if this infimum is attained.

Making the SONC decomposition of the polynomial in the constraint explicit, problem \eqref{eq:existence} can also be written as follows:
\begin{equation}\label{eq:existence-B}
\begin{alignedat}{2}
\text{minimize}\;\;\;\, & \sum_{\vaf\in V}\gamma_\vaf \\
\text{subject to}\;\; & f(\vz) + \sum_{\vaf\in V}\gamma_\vaf \vz^\vaf \equiv \sum_{j=1}^N p_j(\vz) + \sum_{\vaf\in\supp(f)\cap(2\N)^n} \delta_\vaf\vz^\vaf\\
& p_j \in NC(C^j)\ \qquad \quad j=1,\dots,N\\
& \gamma_\vaf \geq 0 \qquad \qquad \qquad \vaf \in V\\
& \delta_\vaf \geq 0 \qquad \qquad \qquad\vaf\in\supp(f)\cap(2\N)^n,
\end{alignedat}
\end{equation}
In computation, the polynomials required to be identical (by the first constraint) need to be represented by their coefficients in some basis, reducing the constraint to a system of $|\supp(f)|$ linear equations. It is convenient to use the monomial basis, in which case, by way of Proposition \ref{thm:sonc-main} and Eq.~\eqref{eq:NC}, the cone constraints $p_j\in NC(C^j)$ can be written as cone constraints involving $\cP^*_{\vlam(C^j)}$. The details of this formulation are given next; they are straightforward, but in order to write the formulation out explicitly, we need to introduce some additional notation.

Let us partition $\supp(f)$ into $S_\text{even}\defeq\supp(f)\cap(2\N)^n$ and $S_\text{odd}\defeq\supp(f)\setminus(2\N)^n$.
Now, $f(\vz) + \sum_{\vaf\in V}\gamma_\vaf\vz^\vaf$ is SONC if and only if there exist 
nonnegative circuit polynomials $p_1,\dots,p_N$ supported on $C^1,\dots,C^N$, respectively and coefficients $\delta_\vaf \geq 0$ for each $\vaf\in S_\text{even}$ such that $p_1(\vz)+\dots+p_N(\vz) + \sum_{\vaf\in S_\text{even}} \delta_\vaf\vz^\vaf = f(\vz) + \sum_{\vaf\in V}\gamma_\vaf\vz^\vaf$.

For each $j\in\{1,\dots,N\}$, let $\vA^j \in \{0,1\}^{|\supp(f)|\times (r_j+1)}$ be the matrix whose \mbox{$(k,\ell)$-th} element is $1$ if the $k$-th element of $\supp(f)$ is the $\ell$-th element of the support of $C^j$, and $0$ otherwise. 
In what follows, $\vA^j_{\vaf,\cdot}$ denotes the row of $\vA^j$ indexed by the exponent vector $\vaf$. Noting that $V\subseteq S_\text{even}$, we can now write the optimization problem \eqref{eq:existence-B} in the monomial basis as follows:
\begin{equation}\label{eq:power}
\begin{alignedat}{2}
\underset{\vgamma,\vx_1,\dots,\vx_N}{\text{minimize}}\;\;\;\, & \sum_{\vaf\in V}\gamma_\vaf \\
\text{subject to}\;\; & \sum_{j=1}^N \vA^j_{\vaf,\cdot}\vx_j - \gamma_\vaf  \leq f_\vaf \qquad \vaf\in V\\
& \sum_{j=1}^N \vA^j_{\vaf,\cdot}\vx_j \leq f_\vaf \qquad \vaf\in S_\text{even} \setminus V\\
& \sum_{j=1}^N \vA^j_{\vaf,\cdot}\vx_j = f_\vaf \qquad \vaf\in S_\text{odd}\\
& \gamma_\vaf \geq 0 \qquad \vaf \in V,\\
& \vx_j \in \cP^*_{\vlam(C^j)} \quad j=1,\dots,N.
\end{alignedat}
\end{equation}
To see this, note that the decision variable $\vx_j \in \R^{r_j+1}$ $(j=1,\dots,N)$ can be interpreted as the coefficient vector of the nonnegative circuit polynomial $p_j$ supported on $C^j$, $\vA^j_{\vaf,\cdot} \vx_j$ is the coefficient of $\vz^\vaf$ in $p_j(\vz)$, and the interpretation of the linear constraints is that $\sum_{j=1}^n p_j(\cdot) = f(\cdot)+\sum_{\vaf\in V}\gamma_\vaf (\cdot)^\vaf - \sum_{\vaf\in S_\text{even}} \delta_\vaf (\cdot)^\vaf$ for some nonnegative coefficients $\delta_\vaf$ ($\vaf\in S_\text{even}$) whose values are the slacks of the first two sets of inequality constraints.

In the dual problem of \eqref{eq:power}, the components of the vector of decision variables $\vy$ may be indexed by monomials in $\supp(f) = V \cup (S_\text{even}\setminus V) \cup S_\text{odd}$, and the dual optimization problem can be written as follows:
\begin{equation}\label{eq:power-dual-A}
\begin{alignedat}{2}
\underset{\vy\in\R^{\supp(f)}}{\text{maximize}}\;\;\;\, & \vf^\T\vy \\
\text{subject to}\;\;\, & -(\vA^j)^\T\vy \in \cP_{\vlam(C^j)} \qquad j=1,\dots,N\\
                        & 1 + y_\vaf \geq 0 \qquad \vaf\in V\\
                        & y_\vaf \leq 0 \qquad\quad\;\;\, \vaf\in S_\text{even}.
\end{alignedat}
\end{equation}
The constraints in \eqref{eq:power-dual-A} can be further simplified. Recalling the definition of $\vA^j$, we have that $- (\vA^j)^\T\vy = (-y_\vaf)_{\vaf\in C^j}$.\deletethis{ if we define $y_\vaf = -1$ for every $\vaf\in V$.} It is also convenient to replace in notation $\vy$ with $-\vy$ throughout. 
This leads to the following representation of the dual of \eqref{eq:existence}:
\begin{equation}\label{eq:power-dual}
\begin{alignedat}{2}
\underset{\vy\in\R^{\supp(f)}}{\text{maximize}}\;\;\;\, & -\vf^\T\vy \\
\text{subject to}\;\;\,  &  (y_\vaf)_{\vaf\in C^j} \in \cP_{\vlam(C^j)} \quad j=1,\dots,N\\
                         &  y_\vaf \geq 0 \qquad \vaf\in \supp(f)\cap(2\N)^n,\\
                         &  y_\vaf \leq 1 \qquad \vaf\in V.
\end{alignedat}
\end{equation}

We are now ready to show that all these problems have attained optimal values, and that strong duality holds for the optimization problems in Eq.~\eqref{eq:existence} and Eq.~\eqref{eq:power-dual}.

\begin{lemma}\label{thm:strong-duality}
Suppose that $V \subseteq (2\N)^n$ and that for every $\vaf_j \in \supp(f) \setminus V$ there is a circuit $C \in \CC$ whose inner monomial is $\vaf_j$ and whose outer monomials are all members of $V$.
Then the optimization problem \eqref{eq:existence-B} has a strictly feasible solution as well as an optimal solution. Therefore, both \eqref{eq:existence} and \eqref{eq:power-dual} have optimal solutions, and the optimal objective function values are equal.
\end{lemma}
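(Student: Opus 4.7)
The plan is to reduce the lemma to the standard strong duality theorem for conic programs, which guarantees attainment in both the primal \eqref{eq:power} (equivalent to \eqref{eq:existence-B} and \eqref{eq:existence}) and the dual \eqref{eq:power-dual}, together with equal optimal values, provided both problems satisfy Slater's condition and the primal is bounded below. Boundedness of the primal is immediate since $\sum_{\vaf\in V}\gamma_\vaf\geq 0$ on the feasible set, so the real task is to construct strictly feasible solutions on both sides.

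For the primal I would use the hypothesis to pick, for each $\vaf\in\supp(f)\setminus V$, a circuit $C_\vaf\in\CC$ with inner exponent $\vaf$ and outer exponents in $V$, and construct the associated $\vx_{C_\vaf}$ with outer coefficients equal to $M$ times the entries of $\vlam(C_\vaf)$ (so that the cone bound $\prod_i(\fai/\lambda_i)^{\lambda_i}$ equals $M$) and inner coefficient equal to $f_\vaf$ (for $\vaf\in S_\text{odd}$) or strictly less than $f_\vaf$ (for $\vaf\in S_\text{even}\setminus V$). For $M$ sufficiently large, each $\vx_{C_\vaf}$ lies in the interior of the corresponding cone $\cP^*_{\vlam(C_\vaf)}$, and the linear equality/inequality constraints at $\vaf\in\supp(f)\setminus V$ are satisfied, the latter with strictly positive slack. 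The remaining circuits in $\CC$ I would handle by setting $\vx_j=\epsilon\vx_j^0$ with $\vx_j^0$ an arbitrary interior point of $\cP^*_{\vlam(C^j)}$ and $\epsilon>0$ small, absorbing the resulting small perturbations into the inner entries of the $\vx_{C_\vaf}$'s (for $\vaf\in S_\text{odd}$) and into the slacks of the inequality constraints at even non-vertex exponents; at each $\vaf\in V$ the variable $\gamma_\vaf$ can be taken strictly positive and large enough to cover any residual contributions.

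For the dual the key insight is that the cone constraints become strict concavity inequalities after an exponential change of variables. I would fix any strictly concave function $g:\R^n\to\R$ that is strictly positive on $\supp(f)$ (e.g., $g(\vaf)=C-\|\vaf\|^2$ with $C>\max_{\vaf\in\supp(f)}\|\vaf\|^2$), choose $\epsilon\in(0,1)$, and set $y_\vaf\defeq\epsilon^{g(\vaf)}$ for $\vaf\in S_\text{even}$ and $y_\vaf\defeq 0$ for $\vaf\in S_\text{odd}$. The box constraints $0<y_\vaf<1$ on $V$ and $y_\vaf>0$ on $S_\text{even}$ then hold strictly; and for each circuit $C^j$ with inner $\vbeta$ and outers $\vaf_i$, the interiority condition $|y_\vbeta|<\prod_i y_{\vaf_i}^{\lambda_i}$ reduces (taking $\log_\epsilon$, which reverses inequalities since $\epsilon<1$) to $g(\vbeta)>\sum_i\lambda_i g(\vaf_i)$, which is precisely the strict concavity of $g$ applied to the convex combination $\vbeta=\sum_i\lambda_i\vaf_i$ with positive weights. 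The main technical obstacle is exactly this dual construction: because $\CC$ may contain circuits whose outer exponents are not contained in $V$, any naive piecewise-constant assignment of $\vy$ ends up on the boundary of some cone constraint, whereas the substitution $y_\vaf=\epsilon^{g(\vaf)}$ handles all circuits uniformly by converting each cone constraint into an instance of strict concavity. With both primal and dual strictly feasible, the classical strong duality theorem for conic programs (see, e.g., \cite[\S30]{Rockafellar1970}) closes the proof.
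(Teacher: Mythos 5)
Your proof is correct, and the primal half is essentially the paper's: both constructions exploit the hypothesis to build, for each non-vertex exponent, an interior point of $NC(C_\vaf)$ with the prescribed inner coefficient and large outer coefficients, then use $\gamma_\vaf$ and the slacks to absorb the excess at the vertices. Where you genuinely diverge is in how attainment is obtained. The paper proves \emph{primal} attainment directly: after fixing the objective value $\Gamma$ of the Slater point, the constraints $\gamma_\vaf\in[0,\Gamma]$ bound the sum of nonnegative summands, hence bound each $p_j$ and $\delta_\vaf$ in norm, so the feasible set restricted to near-optimal points is compact and Weierstrass applies; dual attainment and zero gap then follow from the primal Slater point alone. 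You instead construct a \emph{dual} Slater point via $y_\vaf=\epsilon^{g(\vaf)}$ with $g$ strictly concave (the inequality $\epsilon^{g(\vbeta)}<\epsilon^{\sum_i\lambda_i g(\vaf_i)}$ is exactly strict concavity at the convex combination $\vbeta=\sum_i\lambda_i\vaf_i$, and $r\ge 2$ distinct outer exponents makes it strict), and then invoke two-sided conic strong duality. This is a clean and valid alternative; it buys a symmetric statement (strict feasibility on both sides) without any compactness argument, and your explicit dual interior point is of independent interest — the paper asserts the existence of such points for the related problem \eqref{eq:bound-dual} without construction (Lemma \ref{thm:bound-strong-duality}). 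Two small caveats: your construction tacitly assumes every circuit in $\CC$ has all outer exponents in $(2\N)^n$ (otherwise $y_\vaf=0$ at an odd outer exponent lands on the boundary of $\cP_{\vlam(C^j)}$), which is implicit in the paper's setup since $NC(C)$ is trivial otherwise but is worth stating; and the reference for the two-sided conic duality theorem would be better pointed at a conic-programming source than at \cite[\S30]{Rockafellar1970}.
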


\begin{proof}
We can construct a strictly feasible solution to \eqref{eq:existence-B} as follows.
First, we fix $\delta_\vaf = 1$ for each $\vaf\in \supp(f)\cap(2\N)^n$.
Second, by assumption, for each exponent $\vaf_j \in \supp(f) \setminus V$ we can find a nonnegative circuit polynomial $p_j \in NC(C^j)$ whose inner monomial has the coefficient $f_{\vaf_j}$ (if ${\vaf_j}\in \supp(f)\cap(2\N)^n$) or $f_{\vaf_j}-1$ (if ${\vaf_j}\in \supp(f)\cap(2\N)^n$), while its outer monomials have sufficiently large positive coefficients to ensure that $p_j$ is in the interior of the $NC(C^j)$. In the resulting sum $p(\vz)\defeq\sum_j p_j(\vz) + \sum_\vaf \delta_\vaf z^\vaf$, the coefficient of each $\vz^\vaf$ for $\vaf\in \supp(f)\setminus V$ is equal to $f_\vaf$. By further increasing the outer coefficients in each $p_j$, we can also ensure that for each $\vaf\in V$ the coefficient of each $\vz^\vaf$ in $p$ is strictly greater then $f_\vaf$. The resulting $p$ is a strictly feasible solution; we can set each $\gamma_\vaf$ to an appropriate positive value to equate the two sides of the first constraint of \eqref{eq:existence-B}.

Thus, the minimization problem \eqref{eq:existence-B} is feasible; however it cannot be unbounded since the objective function is constrained to be nonnegative on the feasible region. Therefore, the infimum is finite.

To see that this finite optimal value is attained, observe that because each $\gamma_\vaf$ is nonnegative and because there is some finite objective function value $\Gamma$ attained by the strictly feasible solution exhibited above, we can add to the formulation \eqref{eq:existence-B} the redundant constraints $\gamma_\vaf \in [0,\Gamma]$ for every $\vaf \in V$. Then, since each $\gamma_\vaf$ is bounded, and each of the polynomials $p_j$ and $\delta_\vaf \vz^\vaf$ on the right-hand side of \eqref{eq:existence-B} the first constraint of is a nonnegative polynomial, every norm $\|\cdot\|$ of each $p_j$ and $\delta_\vaf$ can also be bounded a priori by $\|f\|+\sum_{\vaf \in V} \gamma_\vaf \|\vz^\vaf\|$. Thus, the feasible set is compact, and the infimum in \eqref{eq:existence-B} is attained using the Weierstrass Extreme Value Theorem.

We have shown that \eqref{eq:existence-B} has an optimal solution and a Slater point. This implies that \eqref{eq:existence-B} and its dual have optimal solutions with the same objective function values, therefore the equivalent problems \eqref{eq:existence} and its dual \eqref{eq:power-dual} also have optimal solutions with the same optimal objective function value.
\end{proof}

The number of decision variables in the explicit conic formulation \eqref{eq:power}, which can be directly fed to a conic optimization solver, is $|V|+\sum_{j=1}^N (r_j+1)$. This can be prohibitively large for practical computations if the number of circuits $N$ is large. This motivates the rest of the paper, where we narrow down the set of circuits that may be needed in a SONC decomposition and provide an algorithm to iterative identify the useful circuits.

\section{Support of SONC decompositions}\label{sec:support}

Let $f(\vz) = \sum_{\vaf\in\supp(f)} f_\vaf\vz^\vaf$ be a SONC polynomial. It is straightforward to argue that in every SONC decomposition of $f$, every circuit polynomial must be supported on a subset of $\New(f)$; for completeness, we include a short argument in the proof of Theorem \ref{thm:support} below. It is equally natural to ask whether there exists a SONC decomposition for $f$ in which every circuit polynomial is supported on a subset of $\supp(f)$. That this is indeed true was first shown recently in \cite{Wang2019} using combinatorial and algebraic techniques (and some assumptions on the structure of the support); we shall provide an independent proof using convex programming duality (without any assumptions). In the proof, which also motivates the algorithmic approach of the next section, we will need the following simple lemma.



\begin{lemma}\label{thm:extremeinequalities}
	Let $\vc\in\R^N$ and $d\in\R$ be arbitrary. Furthermore, let $\vaf_1,\dots,\vaf_N$ and $\vbeta$ be given vectors in $\R^n$, and consider the convex polytope $P$ consisting of all convex combinations of the $\vaf_i$ that yield $\vbeta$:
	\[ P = \left\{\vlam\in\R_+^N\,\middle|\, \sum_{i=1}^N \lambda_i\vaf_i = \vbeta \text{ and } \sum_{i=1}^N \lambda_i = 1 \right\}. \]
	Then, if the inequality
		\begin{equation}\label{eq:lemmaineq}
	\vc^\T\vlam \leq d
	\end{equation}
	holds for every $\vlam \in P$ for which the set $S_\vlam \defeq \{\vaf_i\,|\,\lambda_i > 0\}$ is affinely independent, then \eqref{eq:lemmaineq} holds for every $\vlam \in P$.
\end{lemma}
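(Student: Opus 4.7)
The plan is to show that the $\vlam\in P$ with affinely independent $S_\vlam$ are precisely the vertices of $P$, and then to invoke the Minkowski-Weyl representation of a polytope as the convex hull of its vertices. Since the inequality $\vc^\T\vlam\leq d$ is affine, it is preserved under convex combinations, and holding at every vertex is enough to make it hold throughout $P$.

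First, I would observe that $P$ is a bounded polyhedron, hence a polytope: it lies in the simplex $\{\vlam\in\R_+^N \,|\, \sum_i\lambda_i=1\}$. Next, I would identify the vertex structure. Writing the affine constraints in matrix form with the block
\[ M = \begin{pmatrix} \vaf_1 & \cdots & \vaf_N \\ 1 & \cdots & 1 \end{pmatrix} \in \R^{(n+1)\times N}, \]
the polytope $P$ is the set of $\vlam\in\R_+^N$ satisfying $M\vlam = \binom{\vbeta}{1}$. By a standard fact about linear programming, $\vlam$ is a vertex of $P$ if and only if the columns of $M$ indexed by $\{i\,|\,\lambda_i>0\}$ are linearly independent. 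Since prepending a $1$ to each $\vaf_i$ converts affine independence of a subset of the $\vaf_i$ into linear independence of the corresponding columns of $M$, the vertices of $P$ are exactly the $\vlam\in P$ whose support $S_\vlam$ is affinely independent.

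Finally, I would apply the Minkowski theorem: since $P$ is a polytope, every $\vlam\in P$ can be written as $\vlam = \sum_k \mu_k \vlam^{(k)}$ where the $\vlam^{(k)}$ are vertices of $P$ and $\mu_k \geq 0$, $\sum_k \mu_k = 1$. Because the inequality $\vc^\T\vlam\leq d$ is affine, by hypothesis it holds at every $\vlam^{(k)}$, so
\[ \vc^\T\vlam = \sum_k \mu_k\, \vc^\T\vlam^{(k)} \leq \sum_k \mu_k\, d = d, \]
which is the desired conclusion.

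The only nontrivial step is the identification of vertices with affinely independent supports, and this is essentially the basic-feasible-solution characterization from LP theory applied to the constraint matrix $M$. Once this is in hand, the rest of the argument is a one-line application of affine interpolation across a convex combination of vertices.
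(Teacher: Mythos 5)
Your proposal is correct and follows essentially the same route as the paper: both identify the vertices of $P$ with the $\vlam$ whose support $S_\vlam$ is affinely independent via the basic-feasible-solution characterization applied to the columns $\binom{\vaf_i}{1}$, and then conclude since an affine inequality valid at all vertices is valid on their convex hull. Your write-up merely makes explicit the final Minkowski/convex-combination step that the paper leaves implicit.
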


\begin{proof}
	This is a reformulation of the statement that every extreme point $\vlam$ of the convex polytope $P$ corresponds to an affinely independent $S_\vlam$. This is immediate from the theory of linear optimization: the basic components of every basic feasible solution of the (feasibility) linear optimization problem
	\begin{equation*}
	\begin{alignedat}{2}
	\text{find}_\vlam\;\;\;& \sum_{i=1}^N \lambda_{i} \vaf_i = \vbeta\\
	& \sum_{i=1}^N \lambda_{_i} = 1\\
	& \lambda_i \geq 0\qquad i=1,\dots,N
	\end{alignedat}
	\end{equation*}
	correspond to linearly independent ($(n+1)$-dimensional) vectors from $\{\binom{\vaf_1}{1}, \dots, \binom{\vaf_N}{1}\}$; thus, the nonzero components of every vertex of $P$ correspond to affinely independent $S_\vlam$.
\end{proof}

\begin{theorem}\label{thm:support}
Every SONC polynomial $f$ has a SONC decomposition in which every nonnegative circuit polynomial and monomial square is supported on a subset of $\supp(f)$.
\end{theorem}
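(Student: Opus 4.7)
The plan is to apply the convex-programming-duality framework of Section \ref{sec:prelim} to the circuit set $\CC'$ consisting of all circuits whose support lies in $\supp(f)$; showing that problem \eqref{eq:existence-B} with circuit set $\CC'$ has optimal value $0$ is, by definition of $\SS(\CC')$, equivalent to the conclusion of the theorem.

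First I would record the standard observation that in any SONC decomposition $f=\sum_j p_j+\sum_\vaf \delta_\vaf\vz^\vaf$ every summand is supported in $\New(f)$: a vertex $\vaf$ of the convex hull of the union of all summand supports can only be an outer exponent of some circuit or a monomial-square exponent (an inner exponent of a circuit lies in the relative interior of the simplex spanned by its outer exponents, hence cannot be a vertex), so the coefficient of $\vz^\vaf$ in the sum is strictly positive and must coincide with $f_\vaf$, forcing $\vaf \in \supp(f)$. Next I would verify that Lemma \ref{thm:strong-duality} applies to $\CC'$ (using $V\subseteq(2\N)^n$, which holds for any nonnegative $f$, and any triangulation of $\New(f)$ into simplices with vertices in $V$), so that the primal with $\CC'$ and its dual \eqref{eq:power-dual} share an attained common optimum. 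Since the primal objective is always nonnegative, the theorem reduces to showing that $\vf^\T\vy\geq 0$ for every $\vy$ feasible for the dual with $\CC'$.

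To establish this inequality I would fix any SONC decomposition $f=\sum_j p_j+\sum_{\vaf\in S_\text{even}}\delta_\vaf\vz^\vaf$ (with supports in $\New(f)$) and extend $\vy$ from $\supp(f)$ to $\New(f)\cap\N^n$ via a convex-envelope construction. Let $g(\vaf)\defeq\log y_\vaf$ on $S_\text{even}$ (where $y_\vaf\geq 0$) and set
\[
\tilde g(\vbeta)\defeq\inf\Bigl\{\sum_k\mu_k g(\vv_k)\,:\,\vbeta=\sum_k\mu_k\vv_k,\;\mu_k\geq 0,\;\sum_k\mu_k=1,\;\vv_k\in S_\text{even}\Bigr\},
\]
defining $\tilde y_\vaf\defeq y_\vaf$ for $\vaf\in\supp(f)$ and $\tilde y_\vaf\defeq e^{\tilde g(\vaf)}$ for the remaining $\vaf\in(\New(f)\cap\N^n)\setminus\supp(f)$. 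Lemma \ref{thm:extremeinequalities} plays the central role here: the dual feasibility of $\vy$ supplies the inequality $\log|y_\vbeta|\leq\sum_i\lambda_i g(\vaf_i)$ only for the affinely independent circuits in $\CC'$, and the lemma upgrades it to $\log|y_\vbeta|\leq\sum_k\mu_k g(\vv_k)$ for every convex combination $\vbeta=\sum_k\mu_k\vv_k$ with $\vv_k\in S_\text{even}$ and every $\vbeta\in\supp(f)$ (including $\vbeta\in S_\text{odd}$). In particular $\tilde g=g$ on $S_\text{even}$, so the extension is consistent there.

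For each circuit $C^j=\{\vaf_1^j,\dots,\vaf_{r_j}^j,\vbeta^j\}$ appearing in the fixed decomposition, a two-level ``decompose each outer exponent as a convex combination of $S_\text{even}$ points, then take term-wise infima'' argument converts the extended discrete convexity into $\log|\tilde y_{\vbeta^j}|\leq\sum_i\lambda_i^j\tilde g(\vaf_i^j)$; that is, $\tilde\vy_{\supp(p_j)}\in\cP_{\vlam(C^j)}$. Duality of $\cP_{\vlam(C^j)}$ and $\cP^*_{\vlam(C^j)}$ then gives $\sum_\vaf p_{j,\vaf}\tilde y_\vaf\geq 0$; summing over $j$, adding the nonnegative contributions $\delta_\vaf y_\vaf$ from the monomial squares, and using the polynomial identity of the decomposition (with $f_\vaf=0$ outside $\supp(f)$) collapses the right-hand side to $\vf^\T\vy$, yielding $\vf^\T\vy\geq 0$. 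I expect the most delicate step to be the power cone verification for circuits with $\vbeta^j\in S_\text{odd}$ (so $y_{\vbeta^j}$ may be negative and $|y_{\vbeta^j}|$ must be preserved under the extension) and simultaneously outer exponents in $(2\N)^n\cap\New(f)\setminus\supp(f)$; the two-level combination argument is precisely what Lemma \ref{thm:extremeinequalities} enables, and it is used there in its full generality.
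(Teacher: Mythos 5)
Your proposal is correct and follows essentially the same route as the paper: strong duality (Lemma \ref{thm:strong-duality}) applied to the problem restricted to circuits supported on $\supp(f)$, followed by an extension of the dual certificate from $\supp(f)$ to $\New(f)\cap\N^n$, with Lemma \ref{thm:extremeinequalities} doing the key work of upgrading the affinely independent (circuit) inequalities to arbitrary convex combinations. The only differences are organizational --- you build the extension in one shot via a convex-envelope infimum and conclude by pairing the extended dual vector against a fixed decomposition, whereas the paper assigns the new components of $\vy$ one at a time and concludes via weak duality for the enlarged primal--dual pair --- but these amount to the same argument.
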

\begin{proof}
First, we argue that no monomial outside the Newton polytope $\New(f)$ can appear in any SONC decomposition. Suppose otherwise, then the convex hull of the union of the circuits is a convex polytope that has an extreme point $\vaf\not\in\New(f)$. The corresponding monomial $\vz^\vaf$ has a 0 coefficient in $f$. At the same time, $\vz^\vaf$ can only appear in the SONC decomposition as a monomial square or as an outer monomial in a circuit, but never as an inner monomial. Therefore, its coefficient is 0 only if its coefficient is 0 in every circuit it appears in, which is a contradiction.


Next, consider two instances of problem \eqref{eq:existence}, or equivalently \eqref{eq:power}: in the first instance, to be called $(P_1)$, we choose the circuits $\CC = \{C^1,\dots,C^N
\}$ to be the set of all circuits that are subsets of $\supp(f)$, while in the second one, $(P_2)$, we choose the circuits to be the set of all circuits that are subsets of $\New(f)\cap\N^n$. Let the duals of the corresponding problems, written in the form \eqref{eq:power-dual}, be $(D_1)$ and $(D_2)$. According to the discussion around \eqref{eq:existence}, it suffices to show that $(P_1)$ and $(P_2)$ have the same optimal objective function values, since in that case either both problems have an optimal solution attaining the value $0$ (and thus SONC decompositions using both sets of circuits exist) or both problems have a strictly positive optimal value (and thus no SONC decomposition exists using either set of circuits).

Using Lemma \ref{thm:strong-duality}, $(P_1)$ and $(D_1)$ have optimal solutions $(\vx_1^*,\dots,\vx_N^*)$ and $\vy^*$ attaining equal objective function values. We now use these solutions to construct feasible solutions for both $(P_2)$ and $(D_2)$ that attain the same objective function value.

For $(P_2)$ this is straightforward: in the formulation \eqref{eq:power}, keep the coefficients $\vx_j$ of the circuit polynomials appearing in $(P_1)$ the same value $\vx_j^*$, and set $\vx_j=\vzero$ for every new circuit that appears only in $(P_2)$.

For $(D_2)$, we also keep $\vy_\vaf = \vy^*_\vaf$ for every $\vaf\in\supp(f)$. With this choice, regardless of the choice of the remaining components of $\vy$, every constraint in $(D_2)$ that already appeared in $(D_1)$ is automatically satisfied; moreover, the objective function remains unchanged, since $f_\vaf=0$ for the new variables. Therefore, it only remains to show that $(\vy_\vaf)_{\vaf\in(\New(f)\cap\N^n)\setminus\supp(f)}$ can be chosen in a way that every cone constraint $(y_\vaf)_{\vaf\in C} \in \cP_{\vlam(C)}$ corresponding to a circuit $C$ supported on $\New(f)\cap\N^n$ is satisfied. We show, constructively, a slightly stronger statement: that if we assign values to the new components of $\vy$ one-by-one in any order, at each step it is possible to assign a value to the component at hand in a way that satisfies every conic inequality that only involves already processed exponents.

Suppose that some exponents have been given consistent values and  let $\hat\vaf \in (\New(f)\cap\N^n)\setminus\supp(f)$ be the exponent whose corresponding $y_{\hat\vaf}$ needs to be assigned a value next. In every circuit that it appears in, the exponent $\hat\vaf$ is either an inner exponent, in which case the cone constraint only provides an upper bound on $|y_{\hat\vaf}|$, or an outer exponent, in which case the cone constraint only provides a lower bound on $y_{\hat\vaf}$. In particular, if $\hat\vaf \not\in (2\N)^n$, then it cannot be an outer exponent, and $y_{\hat\vaf} = 0$ will be a consistent choice. Similarly, if $\hat\vaf \in (2\N)^n$ but $\hat\vaf$ appears only as inner (respectively, outer) exponent in every circuit, then it is easy to find a consistent value for $y_{\hat\vaf}$. (Zero, or a sufficiently large positive value, respectively.) The only non-trivial case is when $\hat\vaf \in (2\N)^n$ and $\hat\vaf$ appears both as inner and as outer exponent in a circuit.

Let $C_1$ be one of the circuits in which $\hat\vaf$ is an inner exponent and which gives the lowest upper bound on $y_{\hat\vaf}$, and let $C_2$ be one of the circuits in which $\hat\vaf$ is an outer exponent and which gives the greatest lower bound on $y_{\hat\vaf}$. We need to show that these bounds are consistent. Let the outer exponents of the circuit $C_1$ be
 $\vaf_1,\dots\vaf_r$ and let $(\lambda_i)_{i=1,\dots,r}$ be the barycentric coordinates of $\hat\vaf$ in this circuit:
 \begin{equation}\label{eq:c1}
 \hat\vaf = \sum_{i=1}^r \lambda_i\vaf_i.
 \end{equation}
Similarly in circuit $C_2$, let $\veta$ be the inner exponent, let $\hat\vaf$ and $\vomega_1,\dots,\vomega_s$ be the outer exponents, and let $\vxi$ denote the barycentric coordinates of $\veta$:
 \begin{equation}\label{eq:c2}
 \veta = \xi_0 \hat\vaf + \sum_{j=1}^s \xi_j\vomega_j.
 \end{equation}
Then it suffices to show that there exists a $y_{\hat\vaf}>0$ such that
\begin{equation}\label{eq:log1}
\log(y_{\hat\vaf}) \leq \sum_{i=1}^r \lambda_i\log(y_{\vaf_i})
\end{equation}
to satisfy the cone constraint $|y_{\hat\vaf}| \leq \prod_{i=1}^r y_{\vaf_i}^{\lambda_i}$ from $C_1$ and
\begin{equation}\label{eq:log2}
\log(|y_\veta|) \leq \xi_0 \log(y_{\hat\vaf}) +  \sum_{j=1}^s \xi_j\log(y_{\vomega_j})
\end{equation}
to satisfy the cone constraint $|y_{\veta}| \leq y_{\hat{\vaf}}^{\xi_0}\prod_{j=1}^s y_{\vomega_j}^{\xi_j}$ from $C_2$. The inequalities \eqref{eq:log1} and \eqref{eq:log2} are consistent if and only if the lower and upper bounds they give for $\log(y_{\hat\vaf})$ are consistent, that is, if
\[ \frac{1}{\xi_0}( \log(|y_\veta| - \sum_{j=1}^s\xi_j\log(y_{\vomega_j})) \leq \sum_{i=1}^r \lambda_i\log(y_{\vaf_i}), \]
which can be rearranged as 
\begin{equation}\label{eq:tadaa} \log|y_\veta| \leq \sum_{i=1}^r \xi_0\lambda_i\log(y_{\vaf_i}) + \sum_{j=1}^s\xi_j\log(y_{\vomega_j}).
\end{equation}
Now, note that from \eqref{eq:c1} and \eqref{eq:c2} we also have
\[ \veta = \sum_{i=1}^r\xi_0\lambda_i \vaf_i + \sum_{j=1}^s \xi_j\vomega_j, \]
with coefficients $\xi_0\lambda_i \geq 0$ and $\xi_j \geq 0$ satisfying $\sum_{i=1}^r \xi_0\lambda_i + \sum_{j=1}^s\xi_j=1$. Thus, \eqref{eq:tadaa} is almost identical to a power cone inequality corresponding to a circuit. The only difference is that the ``outer exponents'' $\{\vaf_1,\dots,\vaf_r, \vomega_1,\dots,\vomega_s\}$ are not necessarily affinely independent, thus these exponents and $\veta$ do not form a circuit. (If they do, we are done, by the inductive assumption that all power cone constraints corresponding to circuits that consists of assigned components of $\vy$ are satisfied.) 

We can now invoke Lemma \ref{thm:extremeinequalities} with $\{\vaf_1,\dots,\vaf_r, \vomega_1,\dots,\vomega_s\}$ playing the role of $\vaf_1,\dots,\vaf_N$, the exponent vector $\veta$ playing the role of $\vbeta$, and $(\log(y_{\vaf_1}),\dots,\log(y_{\vaf_r}), \log(y_{\vomega_1}),\dots,\log(y_{\vomega_s}))$ playing the role of $\vc$, and $\log|y_\veta|$ playing the role of $d$: if every power cone inequality corresponding to a circuit with inner exponent $\veta$ holds, then \eqref{eq:tadaa} also holds. By the argument preceding \eqref{eq:tadaa}, this implies that $y_{\hat{\vaf}}$ can be assigned a value that is consistent with the values of all already processed component of $\vy$.
\end{proof}


\section{SONC bounds and circuit generation}\label{sec:algorithm}

Theorem \ref{thm:support} allows us to dramatically simplify the search for SONC decompositions when the polynomial to decompose is sparse, that is, when $\supp(f)$ is much smaller than $\New(f)\cap\N^n$. That said, even the number of circuits supported on $\supp(f)$ can be exponentially large in the number of variables as the following simple example shows.

\begin{example}\label{ex:many_circuits}
Let $\ve_i$ denote the $i$th unit vector and $\vone\defeq \sum_{i=1}^n\ve_i$, and let $\supp(f)$ be the set $\{\vzero, \vone, 2n\ve_1, \dots,2n\ve_n, 4n\ve_1,\dots,4n\ve_n\}$. This support set has only $2n+2$ elements, but it supports $2^n$ different circuits with $\vone$ as the inner exponent.
\end{example}

In this section, we present an iterative method to identify the circuits that are necessary in a SONC decomposition of a given polynomial $f$. We present the algorithm for the more general and widely applicable problem of finding \emph{the highest SONC lower bound} for a polynomial, which is defined as the negative of the optimal value of the optimization problem
\begin{equation}\label{eq:bound}
\begin{alignedat}{2}
\underset{\gamma\in\R}{\text{minimize}}\;\;\;\, & \gamma \\
\text{subject to}\;\; & (\vz\mapsto f(\vz) + \gamma) \in \SS(\CC)
\end{alignedat}
\end{equation}
This is a well-defined quantity for every polynomial $f$ that has a SONC decomposition, and by extension for every polynomial that has a SONC bound, as the following Lemma shows.
\begin{lemma}\label{thm:bound-attainment}
Suppose that $f$ has a SONC decomposition with a given set of circuits $\CC$. Then \eqref{eq:bound} attains a minimum.
\end{lemma}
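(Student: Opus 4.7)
The plan is to show the feasible set $\Omega \defeq \{\gamma\in\R : f+\gamma \in \SS(\CC)\}$ of \eqref{eq:bound} is nonempty, closed, and bounded below; this suffices because the linear objective $\gamma$ then attains its minimum on $\Omega$. Nonemptiness is immediate from the hypothesis that $f$ has a SONC decomposition using $\CC$, i.e.\ $0\in\Omega$. For boundedness below, every element of $\SS(\CC)$ is nonnegative on $\R^n$, so evaluating $f+\gamma$ at any fixed $\vz_0$ gives $\gamma \geq -f(\vz_0) > -\infty$.

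The substantive step is closedness. I would take a sequence $\gamma_k \in \Omega$ with $\gamma_k \to \gamma^* \defeq \inf \Omega$ and, for each $k$, a decomposition
\[ f(\vz)+\gamma_k \;=\; \sum_{j=1}^N p_j^{(k)}(\vz) + \sum_{\vaf\in\supp(f)\cap(2\N)^n}\delta_\vaf^{(k)}\vz^\vaf, \]
where $p_j^{(k)}\in NC(C^j)$ and $\delta_\vaf^{(k)}\geq 0$, and argue that after passing to a subsequence the coefficient vectors of the summands converge to a valid decomposition of $f+\gamma^*$. The enabling observation is that every summand is a nonnegative polynomial supported on $\supp(f)$, and the coefficient vectors of such polynomials form a pointed closed convex cone $K$ in $\R^{|\supp(f)|}$ (pointed because $p\geq 0$ and $-p\geq 0$ force $p\equiv 0$). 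Consequently $K$ admits a strictly positive linear functional $\ell$ satisfying $\ell(x) \geq c\|x\|$ for all $x\in K$ and some $c>0$; applying $\ell$ to the identity above bounds the norms of all summands uniformly in $k$ by $(\ell(f)+\gamma_k)/c$. A subsequence then converges, with limits $p_j^*$ and $\delta_\vaf^*$ still in $NC(C^j)$ and $\R_+$ respectively (both are closed, cf.\ Eq.~\eqref{eq:NC} and Proposition~\ref{thm:sonc-main}), yielding the desired decomposition of $f+\gamma^*$ and hence $\gamma^*\in\Omega$.

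The main obstacle is this boundedness estimate, since it is the only nonroutine ingredient. It rests on the standard fact that pointed closed convex cones in finite dimensions admit separating strictly positive functionals. An alternative that sidesteps this appeal entirely would be to work directly from the explicit conic formulation \eqref{eq:power} specialized to \eqref{eq:bound} (with $V=\{\vzero\}$) and rerun the compactness construction from the proof of Lemma~\ref{thm:strong-duality}: the outer-exponent components of each $\vx_j^{(k)}$ are nonnegative, so their sum over $j$ is bounded directly from the equations indexed by even exponents, while the inner-exponent component is then bounded in absolute value via the defining inequality of $\cP^*_{\vlam(C^j)}$.
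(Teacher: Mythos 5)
Your proposal is correct and follows essentially the same route as the paper: feasibility from $\gamma=0$, the lower bound $\gamma\geq -f(\vzero)$, and then attainment via a uniform norm bound on the summands of the decomposition of $f+\gamma$ (the paper phrases this as compactness of the feasible region of \eqref{eq:bound-B} plus Weierstrass, you phrase it as sequential closedness of the projected set $\Omega$, but the key estimate is identical). Your only addition is to spell out why the summands are uniformly bounded -- pointedness of the cone of nonnegative polynomials yields a strictly positive, hence norm-coercive, linear functional -- a justification the paper leaves implicit when it asserts that each $p_j$ and $\delta_\vaf$ is "bounded a priori by the same norm of $f+\gamma$."
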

\begin{proof}
The proof is essentially the same as the argument used in the last step of the proof of Lemma \ref{thm:strong-duality}. If $f$ is SONC, then $\gamma=0$ is a feasible solution to \eqref{eq:bound}. At the same time, the problem cannot be unbounded; indeed, the infimum cannot be lower than $-f(\vzero)$. So the infimum in \eqref{eq:bound} is finite. 
Moreover, problem \eqref{eq:bound} can be equivalently written as 
\begin{equation}\label{eq:bound-B}
\begin{alignedat}{2}
{\text{minimize}}\;\;\;\, & \gamma \\
\text{subject to}\;\; & f(\vz) + \gamma = \sum_{j=1}^N p_j(\vz) + \sum_{\vaf\in\supp(f)\cap(2\N)^n} \delta_\vaf\vz^\vaf\\
& p_j \in NC(C^j)\ \quad j=1,\dots,N\\
& \gamma \in [-f(\vzero), 0]\\
& \delta_\vaf \geq 0 \qquad \vaf\in\supp(f)\cap(2\N)^n
\end{alignedat}
\end{equation}
Since $\gamma$ is already bounded, and each of the polynomials $p_j$ and $\delta_\vaf \vz^\vaf$ on the right-hand side of the first constraint is a nonnegative polynomial, any norm of each $p_j$ and $\delta_\vaf$ can also be bounded a priori by the same norm of $f+\gamma$, and thus the feasible region of \eqref{eq:bound-B} is compact. The claim now follows from the Weierstrass Extreme Value Theorem.
\end{proof}

\deletethis{
\textcolor{blue}{
?? \eqref{eq:bound} does \emph{not} always have a feasible, let alone strictly feasible solution. But its dual trivially has a strictly feasible solution. Therefore, what we have is that \emph{if} $f$ admits a SONC lower bound, \emph{then} it has an attained minimum. (This is what we showed above, too, without referencing the dual. This argument would also show strong duality in a $\min=\sup$ sense.) One issue that it is unclear whether the dual has attainment. (Since we want to use this in a numerical method, it's ok, though. But it's inelegant.) We also have a missing step of just deciding whether a SONC bound exists. This is not a dealbreaker, but it's inelegant. Is there a formulation that has strong duality that we can use to decide whether a SONC bound \emph{exists}? 
}
}

We now consider the problem of identifying the circuits necessary to obtain the strongest possible SONC lower bound on a polynomial. Consider the optimal solution of \eqref{eq:bound} for a set of circuits $\CC = \{C^1,\dots,C^N\}$ for which this problem attains a minimum. Analogously to \eqref{eq:power-dual}, the dual of \eqref{eq:bound} can be written as
\begin{equation}\label{eq:bound-dual}
\begin{alignedat}{2}
\underset{\vy\in\R^{\supp(f)}}{\text{maximize}}\;\;\;\, & -\vf^\T\vy \\
\text{subject to}\;\;\,  &  (y_\vaf)_{\vaf\in C^j} \in \cP_{\vlam(C^j)} \quad j=1,\dots,N\\
                         &  y_\vaf \geq 0 \quad \vaf\in \supp(f)\cap(2\N)^n,\\
                         &  y_\vzero = 1.
\end{alignedat}
\end{equation}
Although Eq.~\eqref{eq:bound} does not always have a Slater point, its dual \eqref{eq:bound-dual} trivially has, therefore, the supremum in \eqref{eq:bound-dual} equals the attained minimum in \eqref{eq:bound}. Thus, an (approximately) optimal solution to \eqref{eq:bound-dual} serves as a certificate of (approximate) optimality of the bound given by \eqref{eq:bound} for the given set of circuits. For brevity, we state this formally without a proof.
\begin{lemma}\label{thm:bound-strong-duality}
For every polynomial $f$ and set of circuits $\CC=\{C^1,\dots,C^N\}$, the optimization problem \eqref{eq:bound-dual} has a Slater point. Therefore, if $f$ has a SONC lower bound, then the optimal value of \eqref{eq:bound-dual} equals the (attained) optimal values of \eqref{eq:bound} and \eqref{eq:bound-B}.
\end{lemma}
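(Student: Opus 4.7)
The lemma makes two assertions: that the dual program \eqref{eq:bound-dual} admits a Slater point, and that (under the hypothesis that $f$ has some SONC lower bound) the optimal values of \eqref{eq:bound}, \eqref{eq:bound-B} and \eqref{eq:bound-dual} coincide. I would dispatch these in order: first exhibit an explicit strictly feasible point for \eqref{eq:bound-dual}, and then combine it with Lemma \ref{thm:bound-attainment} and the standard strong duality theorem for conic programs.

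\textbf{Constructing a Slater point.} The task is to choose $\vy \in \R^{\supp(f)}$ with $y_\vzero = 1$, with $y_\vaf > 0$ for every $\vaf \in \supp(f) \cap (2\N)^n$, and with $(y_\vaf)_{\vaf \in C^j}$ in the interior of $\cP_{\vlam(C^j)}$ for each circuit $C^j$. The interior condition for a circuit with outer exponents $\vaf^j_1,\dots,\vaf^j_{r_j}$, inner exponent $\vbeta^j$ and barycentric coordinates $\lambda^j_i$ is
\begin{equation*}
y_{\vaf^j_i} > 0\ \text{ for each } i, \qquad |y_{\vbeta^j}| < \prod_{i=1}^{r_j} y_{\vaf^j_i}^{\lambda^j_i}.
\end{equation*}
My idea is to set $y_\vaf \defeq M^{\|\vaf\|_2^2}$ for some fixed $M>1$. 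Then $y_\vzero = 1$, every $y_\vaf>0$, and after taking logarithms the strict power cone inequality for $C^j$ reduces to
\begin{equation*}
\|\vbeta^j\|_2^2 < \sum_{i=1}^{r_j} \lambda^j_i \|\vaf^j_i\|_2^2.
\end{equation*}
This is the strict Jensen inequality for the strictly convex function $\|\cdot\|_2^2$, which holds because $\vbeta^j=\sum_i\lambda^j_i\vaf^j_i$ is a strict convex combination (all $\lambda^j_i>0$, summing to $1$) of the affinely independent, and in particular pairwise distinct, outer exponents.

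\textbf{Strong duality.} Once a Slater point is in hand, strong duality for conic programs applies to the primal-dual pair \eqref{eq:bound}--\eqref{eq:bound-dual}: the dual supremum equals the primal infimum. When $f$ admits a SONC lower bound, Lemma \ref{thm:bound-attainment} already guarantees that \eqref{eq:bound} (equivalently \eqref{eq:bound-B}) attains its minimum, which then coincides with the dual supremum, as claimed.

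\textbf{Expected obstacle.} The only non-routine step is producing a single $\vy$ that strictly satisfies every one of the (possibly exponentially many) power cone constraints at once. The choice $y_\vaf = M^{\|\vaf\|_2^2}$ reduces this to a uniform property of the strictly convex function $\|\cdot\|_2^2$ together with the barycentric identity of each circuit, so the construction is actually quite short; the remainder of the argument is a textbook application of conic duality.
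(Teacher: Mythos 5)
Your proposal is correct and follows exactly the route the paper intends (the paper states this lemma without proof, merely asserting that \eqref{eq:bound-dual} ``trivially'' has a Slater point and then invoking attainment of the primal minimum as in Lemma \ref{thm:bound-attainment}). Your explicit Slater point $y_\vaf = M^{\|\vaf\|_2^2}$, verified via strict Jensen applied to the barycentric identity of each circuit (strictness holding because the outer exponents are affinely independent, hence distinct, and all barycentric weights are positive), is a clean and valid instantiation of that omitted step, and the concluding conic-duality argument is the standard one.
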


Applying this Lemma by substituting the set of all circuits supported on $\supp(f)$ for $\CC$, we have that if the optimal solution $\vy^*$ of \eqref{eq:bound-dual} satisfies
\begin{equation}\label{eq:constr}
(y^*_\vaf)_{\vaf\in C} \in \cP_{\vlam(C)}
\end{equation}
for every circuit $C$ supported on $\supp(f)$, then the optimal value $\gamma^*$ of \eqref{eq:bound} cannot be improved by adding more circuits supported on $\supp(f)$ to the problem. Conversely, if we can find a circuit $C$ supported on $\supp(f)$ for which \eqref{eq:constr} is violated, then adding $C$ to the set $\CC$ may improve the bound given by \eqref{eq:bound}. Finally, we can repeat the argument of Thm.~\ref{thm:support} (with the primal-dual optimization pair \eqref{eq:bound}-\eqref{eq:bound-dual} playing the role of \eqref{eq:existence-B}-\eqref{eq:power-dual}) to show that adding any circuits that are not supported on $\supp(f)$ to $\CC$ also cannot improve the bound.
 
This motivates the iterative algorithm shown in \mbox{Algorithm \ref{alg:CG}}.

\SetKwInput{KwParameters}{parameters}
\begin{algorithm2e}
initialize $\CC=\{C^1,\dots,C^N\}$ \;
\Repeat{false}{
    solve the primal-dual pair \eqref{eq:bound-B}-\eqref{eq:bound-dual} for the optimal $(\gamma^*,p^*,\delta^*)$  and $\vy^*$ \;
    find the circuit $C$ supported on $\supp(f)$ for which \eqref{eq:constr} is the most violated \nllabel{find-circuit-step} \;
    \eIf{no circuit violating \eqref{eq:constr} exists}
    {
    \Return $\gamma^*$ and the SONC decomposition $(p^*,\delta^*)$ of $f+\gamma^*$ \;
    }
    {
        add circuit $C$ found in Step \ref{find-circuit-step} (and possibly other circuits) to $\CC$ \nllabel{add-circuit-step}
    }
}
\caption{SONC bound with iterative circuit generation}\label{alg:CG}
\end{algorithm2e}

We defer the discussion on the initialization step to the end of this subsection and focus on the main loop first, assuming that the initial set of circuits $\CC$ has been chosen such that the optimal solutions sought in the first iteration exist.

The most violated constraint in Line \ref{find-circuit-step} can be efficiently computed using the following observation: for a fixed exponent vector $\vbeta$, finding the circuit corresponding to the most violated constraint among circuits with inner monomial $\vz^\vbeta$ amounts to solving the linear optimization problem
\begin{equation}\label{eq:CG_LP}
\begin{alignedat}{2}
{\text{minimize}}\;\;\;\;& \sum_{\vaf\in\supp(f)\setminus\{\vbeta\}} \lambda_\vaf \log(y_{\vaf})\\
\text{subject to}\;\;\;& \sum_\vaf \lambda_\vaf \vaf = \vbeta\\
& \sum_\vaf \lambda_\vaf = 1\\
& \lambda_\vaf \geq 0\qquad \forall\vaf \in \supp(f)\setminus\{\vbeta\}
\end{alignedat}
\end{equation}
Based on Lemma~\ref{thm:extremeinequalities}, every basic feasible solution $\vlam$ of \eqref{eq:CG_LP} corresponds to a circuit whose outer monomials are $\{z^\vaf\,|\,\lambda_\vaf > 0\}$ and whose inner monomial is $\vz^\vbeta$. Recalling the definition of the power cone from Eq.~\eqref{eq:power-cone-def}, if $\vlam^*$ is an optimal basic feasible solution of \eqref{eq:CG_LP} and the optimal value is $v^*$, then the inequality \eqref{eq:constr} corresponding to $\vlam^*$ (and the circuit $C$ determined by $\vlam^*$) is violated if and only if $\exp(v^*) < |y_\vbeta|$. Solving \eqref{eq:CG_LP} for each $\vbeta$, we can either conclude that there are no circuits to add to the formulation or find up to one promising circuit for each $\vbeta$ to add to the formulation in Line \ref{add-circuit-step}. In our implementation we add to $\CC$ the circuit corresponding to the most violated inequality for each $\vbeta$.

\paragraph{Initialization} Problem \eqref{eq:existence} and the proof of Lemma \ref{thm:strong-duality} suggest a strategy for the initialization step of Algorithm \ref{alg:CG}, which is also entirely analogous to solving linear optimization problems using a two-phase method. We can apply the same circuit generation strategy as above to an instance of problem \eqref{eq:existence}, where $\CC$ contains all possible circuits supported on $\supp(f)$. (Additionally, we may replace $f$ by any $f+c$ with an arbitrary constant $c$.) An initial set of circuits for which this optimization problem is feasible can be easily found: for each exponent $\vaf \in \supp(f)$ that is not a monomial square (that is, for which either $\vaf\not\in(2\N)^n$ or $f_\vaf < 0$ or both), find a circuit whose inner exponent is $\vaf$ and whose outer exponents are among the vertices $V$ of $\New(f)$. This can be done by computing a basic feasible solution of a linear feasibility problem with $|V|$ variables. If for any $\vaf$ such a circuit does not exist, then \eqref{eq:existence} trivially does not have a feasible solution, and $f+c$ does not have a SONC bound. On the other hand, if the initial set of circuits exists, then \eqref{eq:existence} can be solved using the same column generation strategy, and we either find that the optimal objective function value of \eqref{eq:existence} is positive, in which case $f+c$ does not have a SONC bound, or the optimal value is $0$. In the latter case we also obtain a feasible solution with a current set of circuits $\CC$. This set can be used as the initial set of circuits in Algorithm \ref{alg:CG} to find the best SONC bound on $f$.

\begin{remark}
	There are many polynomials for which Algorithm \ref{alg:CG} for the SONC bounding problem \eqref{eq:bound} can be trivially initialized, without using \eqref{eq:existence} as a ``Phase I'' problem as described above. A sufficient condition is the following: suppose that for every $\vaf\in\supp(f)$ for which $\vaf\not\in(2\N)^n$ or $f_\vaf < 0$, the exponent vector $\vaf$ is contained in the interior of a face of $\New(f)$ that also contains $\vzero$. Then for each such $\vaf$ we can find a circuit whose inner exponent is $\vaf$ and for which $\vzero$ is one of the outer exponents. Taking $\CC$ as the set of these circuits, \eqref{eq:bound} (or equivalently, \eqref{eq:bound-B}) is clearly feasible. This is the same condition as the \emph{nondegeneracy} condition of \cite{SeidlerDeWolff2018} and \cite{Wang2019}.
\end{remark}

We will end this section with a toy example.

\begin{example}
Consider the polynomial $f$ given by \[f(z_1,z_2) = 1+z_2^2-z_1^2z_2^2+z_1^2z_2^6+z_1^6z_2^2.\]
This polynomial clearly has a SONC lower bound, since it has only one monomial that is not a monomial square, $-z_1^2 z_2^2$, and the exponent of that monomial is the inner exponent of the circuit $C_1=\{(0,0), (2,6), (6,2), (2,2)\}$, which contains $\vzero$ as an outer exponent and has signature $\vlam(C_1) = (\frac{1}{2},\frac{1}{4},\frac{1}{4})$. Thus, for a sufficiently large constant $\gamma$, we have $\gamma + z_1^2 z_2^6 + z_1^6 z_2^2 - z_1^2z_2^2 \geq 0$ for every $\vz$, and the remaining terms in $f$ are monomial squares.

Solving the primal-dual pair \eqref{eq:bound}-\eqref{eq:bound-dual} with $\CC=\{C_1\}$, we obtain the optimal value $\gamma^* = -\frac{7}{8}$, and the SONC decomposition
\[ f(z_1,z_2) - \frac{7}{8} = (z_2)^2 + \left( \frac{1}{8} + z_1^2z_2^6 + z_1^6z_2^2 - z_1^2z_2^2 \right); \]
the first term on the right-hand side is a monomial square, the second one is a member of $NC(C_1)$. The dual optimal solution (indexing the components in degree lexicographic order) is $\vy^* = (1,0,\frac{1}{4},\frac{1}{16},\frac{1}{16})$.

The constraint generation algorithm consists of solving two linear optimization problems: one to find the most promising circuit with $z_2^2$ as the inner monomial and one to find the most promising circuit with $z_1^2 z_2^2$ as the inner monomial. The remaining three monomials are vertices of the Newton polytope, and need not be considered. The first search is unsuccessful: $y^*_{(0,2)} = 0$, therefore no circuit with $(0,2)$ as an inner exponent can violate its corresponding power cone inequality \eqref{eq:constr}. The second linear optimization problem identifies the circuit $C_2 = \{(0,2), (6,2), (2,2)\}$, with signature $\vlam(C_2) = \left(\frac{2}{3}, \frac{1}{3}\right)$. The corresponding power cone constraint \eqref{eq:constr} is violated, since $y^*_{(2,2)} = \frac{1}{4} > 0 = y^*_{(0,2)}$.

Solving the primal-dual pair \eqref{eq:bound}-\eqref{eq:bound-dual} with $\CC=\{C_1,C_2\}$, the optimal value improves to $\gamma^* = -1$, and we obtain the SONC decomposition
\[  f(z_1,z_2) - 1 = z_1^2z_2^6 + \left( z_2^2+z_1^6z_2^2-z_1^2z_2^2 \right); \] 
the first term on the right-hand side is a monomial square, the second one is a member of $NC(C_2)$. The circuit $C_1$ is superfluous. The new optimal dual solution is $\vy^* = (1,0,0,0,0)$. Since every component of $\vy^*$ that corresponds to a non-vertex exponent is zero, there cannot be any circuits whose corresponding power cone inequality is violated, proving that we have found the optimal SONC bound. In this example, we also have $f(z_1,0) = 1$, proving that $1$ is the best possible global lower bound on $f$, that is, the optimal SONC bound is the global minimum.
\end{example}

\deletethis{

\section{A dual method for circuit generation}
A naive implementation of Algorithm \ref{alg:CG} may be rather inefficient, because the number of optimization variables in \eqref{eq:bound-B} grows linearly with the circuits. Suppose for a moment that we are not interested in the SONC decomposition certifying the bound computed by Algorithm \ref{alg:CG}, only the bound itself. Then the solution of the primal problem \eqref{eq:bound-B} is not required; both the bound computation in Step 3 and the search for violated inequalities in Step 4 can be carried out using only the solution $\vy^*$ and the optimal value $\gamma^*$ of the dual problem \eqref{eq:bound-dual}. In this case it is most attractive to use a dual interior-point method (alternatively, a primal interior-point method to solve the dual problem), since the problem \eqref{eq:bound-dual} is a standard form conic optimization problem with only a single linear equality constraint (independently of the number of circuits or other dimensions of the problem) and a cone constraint involving a cone for which an easily computable logarithmically homogeneous self-concordant barrier function is available: the intersection of a nonnegative orthant and a Cartesian product of power cones. (See \cite{Chares2009} for information on barrier functions for the generalized power cone.)

It further simplifies matters that this problem also has a Slater point (recall the discussion around Lemma \ref{thm:bound-strong-duality}), which can be used as an easily computable initial point after scaling to satisfy $y_\vzero=1$. (Although, in our implementation, see below, we employ an infeasible interior-point solver.)

Another way to put this is that in place of employing column generation on the primal problem, we could solve the dual problem using constraint generation, but untypically for constraint generation, it is now trivial to restart the optimization from a feasible solution after generating and adding new constraints. Since we are using an interior-point method with low worst-case iteration complexity (see below), warmstarting is not a concern, any starting point is fine.

In our implementation we use the open-source Matlab code \texttt{alfonso} \cite{PappYildiz2019, alfonso-github}, a nonsymmetric cone optimization code that can can directly solve \eqref{eq:bound-dual} in polynomial time using a predictor-corrector approach, with only the initial point and a barrier function for this cone as an input, without any model transformation. The worst-case complexity analysis of this method for a general optimization problem \cite[Thm.?]{?} yields that the number of iterations required to compute an $\epsilon$-optimal solution is $O(\nu^{1/2}\log(1/\epsilon))$, where $\nu$ is the barrier parameter of the barrier function used, while the number of arithmetic operations per iteration is $O(m^3 + B)$ where $m$ is the number of optimization variables and $B$ is the number of floating point operations required to compute the Hessian of the barrier function at a given point. (In each iteration of the interior-point method, we need compute the Hessian of the barrier function at the current point twice, once for the predictor step and once for the corrector, and then compute an appropriate Newton step in $O(m^3)$ flops.) The space complexity (required memory) is $O(m^2)$, the bottleneck being the storage of the barrier Hessian.

In our case, $m=|\supp(f)|$, hence the required $O(m^2)$ memory remains constant as the number of circuits increases. The computation of the Hessian of the barrier requires $O(|\CC|n^2 + m^2)$ flops, and the Newton step requires $O(m^3)$ flops. The barrier parameter $\nu$ is of order $O(|\CC|\cdot n)$. Altogether, this results in a total time complexity of $O(|\CC|^{1/2}\cdot n^{1/2} \cdot \max(|\CC|\cdot n^2, |\supp(f)|^3))$. For a given sparse polynomial $f$, $\supp(f)$ is a manageable constant throughout the circuit generation procedure, $n$ is (without loss of generality) smaller than $m$, and the initial $|\CC|$ is also not greater than $m$. Therefore, in the early iterations, $|\CC|n^2 \leq |\supp(f)|^3$. In all of our experiments, $|\CC|$ remains fairly small, in agreement with the fact that SONC polynomials have SONC decompositions with no more than $|\supp(f)|$ circuits. Thus, as the circuit generation progresses and $|\CC|$ grows, the solution time for \eqref{eq:bound-dual} grows rather amicably, proportional to $|\CC|^{1/2}$. In our experiments (Section \ref{sec:numericalexperiments}), the only parameter limiting the computation of SONC bounds was $|\supp(f)|$, which could be increased as long as the solution of systems of linear equations of order $\supp(f)$ is manageable.

\subsection{Reconstructing the optimal SONC decomposition} In the previous analysis we assumed that optimal solution $\vx$ of \eqref{eq:bound-B} is not required, but in many applications it is important to compute the SONC decomposition certifying the computed SONC bounds. A simple but perhaps inelegant solution is to apply the above dual method first to compute the circuits $\CC$ of the decomposition, and only then solve \eqref{eq:bound-B} with this set $\CC$. This allows the computation of an optimal decomposition at the price of solving only one large instance of the problem \eqref{eq:bound-B}. We show that we can compute the optimal decomposition much faster, without any optimization.
.....
}

\section{Implementation} \label{sec:implementation}
In our implementation we use the open-source Matlab code \texttt{alfonso} \cite{PappYildiz2019, alfonso-github}, a nonsymmetric cone optimization code that can directly solve the primal-dual pair (\ref{eq:bound}-\ref{eq:bound-dual}) using a predictor-corrector approach without any model transformation. (In particular, there is no need to represent the SONC cone or its dual as an affine slice of a Cartesian product of exponential, relative entropy, or semidefinite cones.) \texttt{alfonso} requires only an interior point in the primal cone and a logarithmically homogeneous self-concordant barrier function for the primal cone as input. Since the primal cone is a Cartesian product of nonnegative half-lines and dual cones of generalized power cones, both an easily computable initial point and a suitable barrier function are readily available; see, for example, \cite{Chares2009}.

Alternatively, we can use \eqref{eq:bound-dual} as the ``primal'' problem for \texttt{alfonso}. This cone is an intersection of generalized power cones and a nonnegative orthant, so the barrier function is once again readily available, this time as the sum of well-known barrier functions. Furthermore, the Slater point for this problem (recall the discussion around Lemma~\ref{thm:bound-strong-duality}) can be used as an easily computable initial point after scaling to satisfy the only non-homogeneous constraint $y_\vzero=1$. In our implementation we used the latter variant. When started with a feasible initial solution, \texttt{alfonso} maintains feasibility throughout. Therefore, using the dual variant and the dual Slater point as an initial feasible solution, we are guaranteed that are our near-optimal solution to (\ref{eq:bound}-\ref{eq:bound-dual}) is dual feasible, and thus the dual optimal value is a lower bound on the minimum even if the other optimality conditions are not satisfied to a high tolerance.

In our first set of experiments (smaller instances with general Newton polytopes) we used the two-phase version of the circuit generation algorithm. In our second set of experiments (larger problems with simplex Newton polytopes) it was easy to find an initial set of circuits, and started with Phase II. In the circuit generation steps, we added every promising circuit identified (up to one circuit for each monomial that is not a vertex of the Newton polytope).

The linear optimization problems used in circuit generation were solved using Matlab's built-in \texttt{linprog} function with options that ensure that an optimal basic feasible solution is returned (and not the analytic center of the optimal face).


\section{Numerical experiments} \label{sec:numericalexperiments}

\subsection{The Seidler--de Wolff benchmark problems}

The first set of instances the algorithm was tested on were problems from the database of unconstrained minimization benchmark problems accompanying the paper \cite{SeidlerDeWolff2018}. Each instance is a polynomial generated randomly in a way that the polynomial is guaranteed to have a lower bound and a prescribed number of unknowns, degree, and cardinality of support (number of monomials with nonzero coefficients). Since this database is enormous (it has over $30\,000$ instances), we opted to use only the largest and most difficult instances: the ones with general (not simplex) Newton polytopes and $500$ monomials in their support. There are 438 such instances; the number of unknowns $n$ in these instances ranges from 4 to 40, the degree $d$ between 6 and 60. These are indeed very sparse polynomials, the dimensions $\binom{n+d}{d}$ of their corresponding spaces of ``dense'' polynomials ranges from 8008 to $6\cdot 10^{25}$.

All experiments were run using Matlab 2017b on a Dell Optiplex 7050 desktop with a 3.6GHz Intel Core i7 CPU and 32GB RAM.

Figure \ref{fig:polydb-iterations} shows the histogram of the total number of circuit generation iterations in Phase I and Phase II combined. The smallest possible value is therefore $2$ (in the case when the initial set of circuits is optimal). The histogram shows that in the vast majority of these instances no more than 1 additional iteration was needed, that is, all necessary circuits were either among the initial ones, or were identified in the first circuit generation step of Phase II. Correspondingly, the scatterplot in Figure \ref{fig:polydb-circuits-time} shows that most of the instances could be solved under a minute, and that the total number of circuits needed to certify the optimal bound was under 1000. (Recall that the initial set of circuits is below $|\supp(f)|=500$.) It is perhaps interesting to note that even in the ``hardest'' instance, the algorithm generated fewer than 4500 circuits before the optimal bound was found. This was the only instance where the total running time exceeded one hour; most instances were solved under one minute, and nearly all of them under 5 minutes. There was no discernible pattern indicating what made the difficult instances difficult. In particular, the number of unknowns and the degree alone are not good predictors of the number of circuits or the number of circuit generation iterations.

The optimal solutions or the best known lower bounds are not available in the database. However, upper bounds on the minima of the polynomials can be computed using multi-start local optimization. For simplicity and reproducibility, we used the \texttt{NMinimize} function in Mathematica (version 11.3) with default settings to compute approximate minimizers for each of the 438 instances. As the histogram of optimality gaps in Figure \ref{fig:polydb-gap} shows, the computed SONC bounds were near-optimal for each instance. This is somewhat surprising, and merits further investigation, as it is in general not guaranteed that a polynomial that is bounded from below has a SONC bound at all; one certainly cannot expect that this bound will always be close to (or equal to) the infimum of the polynomial. Similarly, it cannot be hoped that the local minimum returned by Mathematica is a global minimum. Nevertheless, in each of these instances, the SONC bound was within 1.2\% of the global minimum of the polynomial, and with the exception of 46 instances (=10.5\%), the relative optimality gap was within $10^{-6}$.

\begin{figure}
	\centering
	\includegraphics[width=0.95\textwidth]{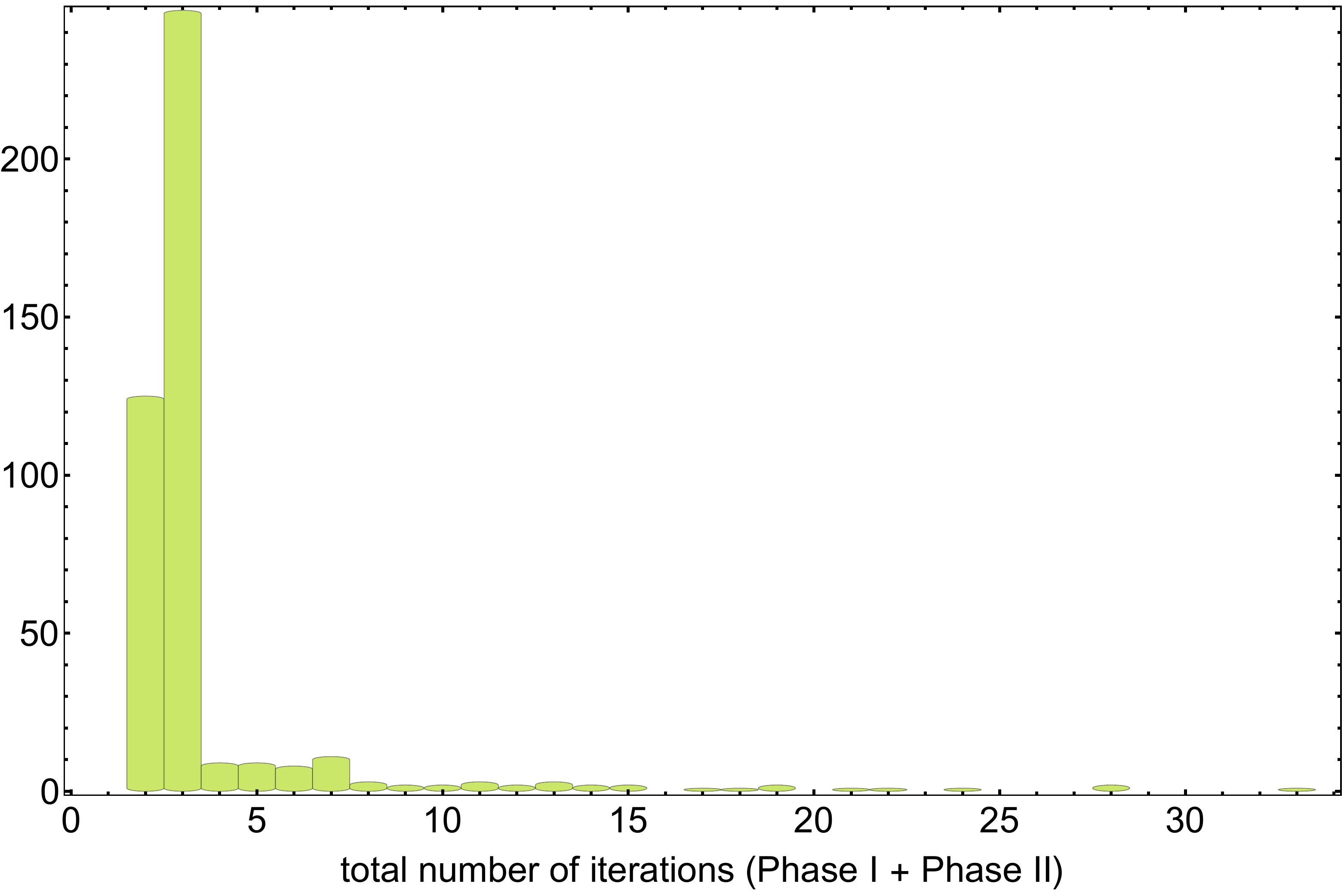}\caption{Histogram of the total number of circuit generation iterations for the largest instances of the Seidler--de Wolff instances. (438 instances; each with 500 monomials, with a varying number of unknowns and degree.) The smallest possible value is $2$ (one Phase I iteration and one Phase II iteration). Most instances were solved in two or three iterations.}\label{fig:polydb-iterations}
\end{figure}

\begin{figure}
	\centering
	\includegraphics[width=0.95\textwidth]{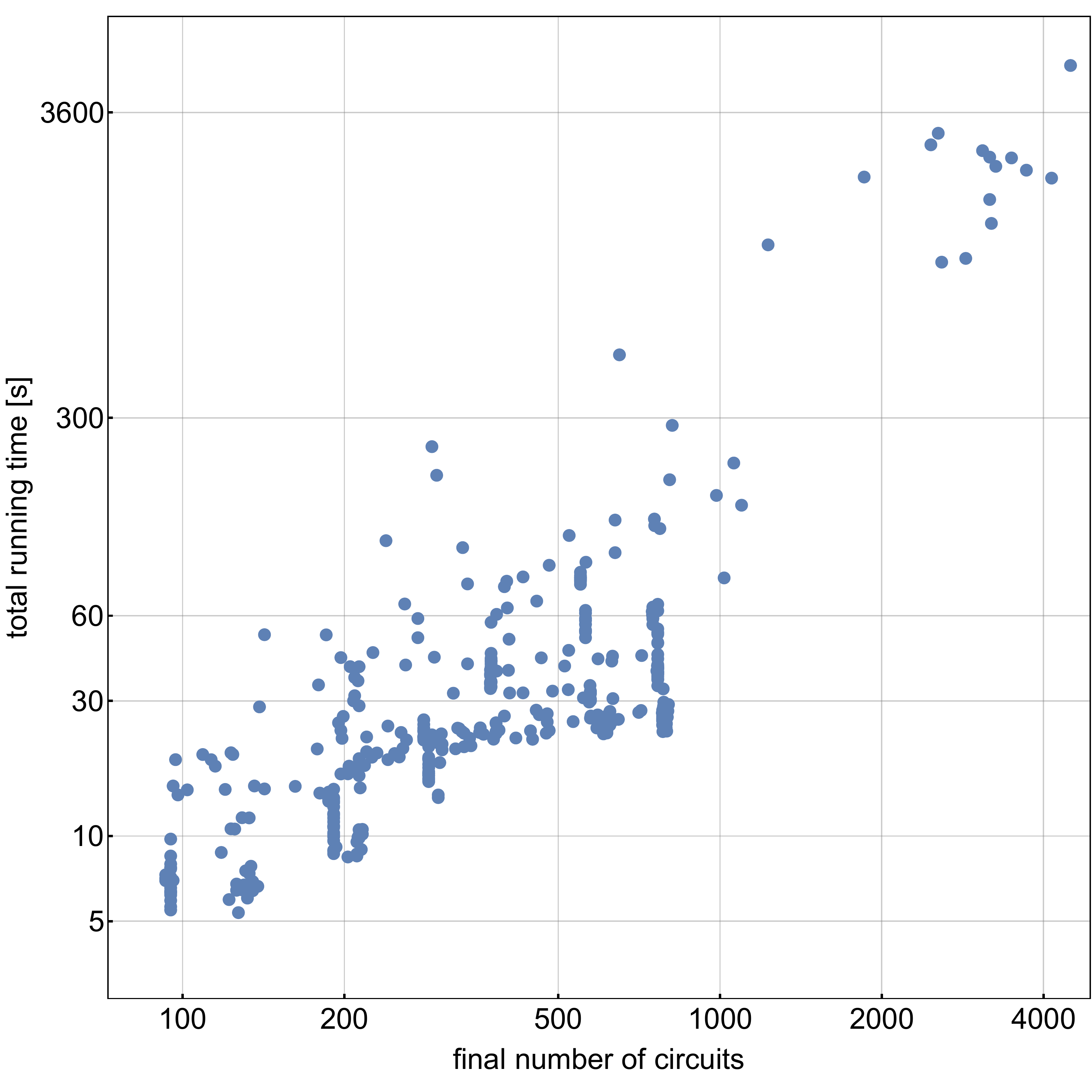}\caption{Scatter plot of the number of circuits in the final iteration of the algorithm and the total running time of the algorithm for the Seidler--de Wolff instances, shown on a logarithmic scale for better visibility. Each dot represents an instance. Since the number of iterations was uniformly small for most instances, the running times and the final number of circuits correlate well. Most instances were solved under a minute, and nearly all of them under 5 minutes. One instance took over an hour to solve.}\label{fig:polydb-circuits-time}
\end{figure}

\begin{figure}
	\centering
	\includegraphics[width=0.95\textwidth]{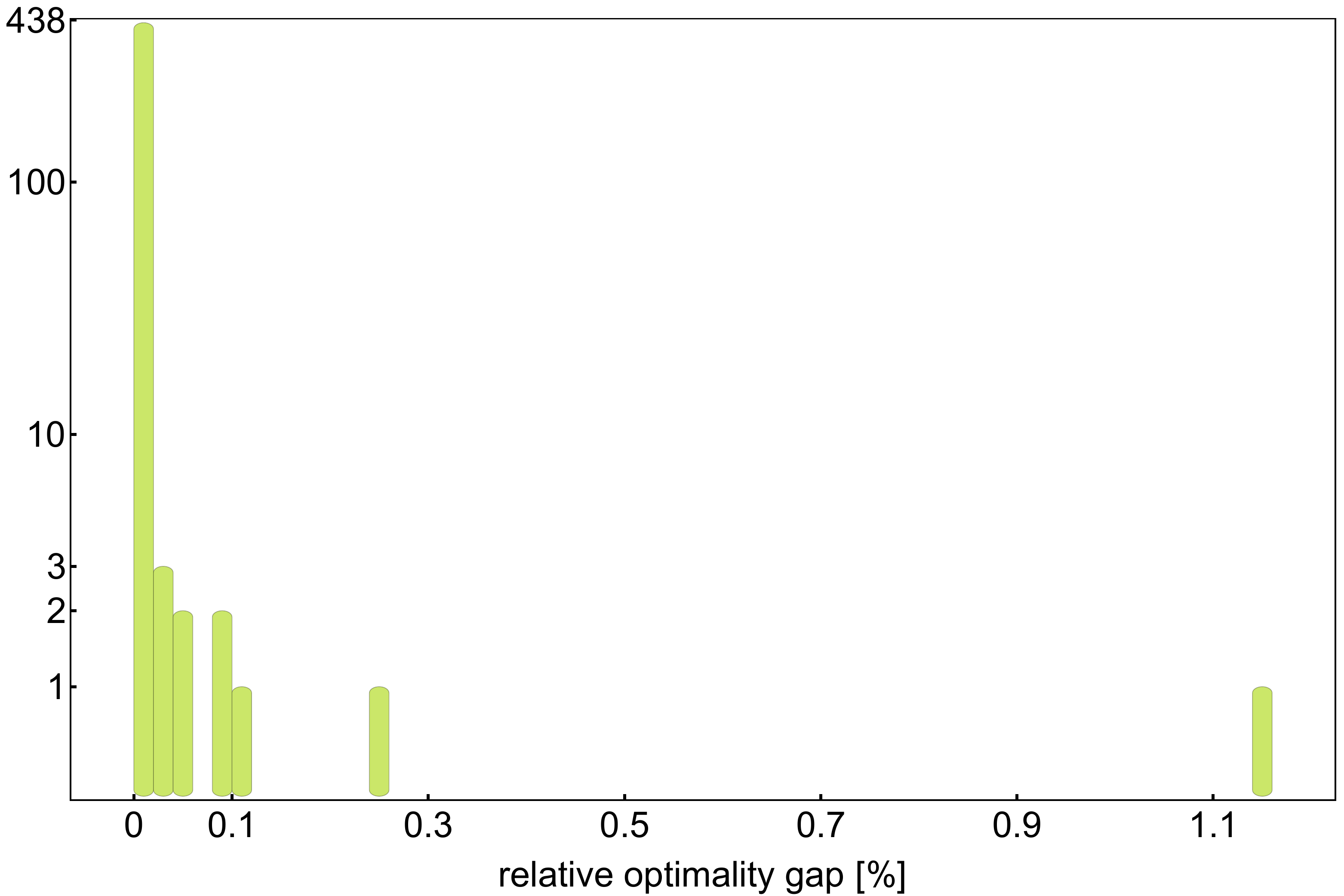}\caption{Histogram of the relative optimality gaps obtained for the Seider--de Wolff instances. Surprisingly, the computed SONC lower bounds were close to the optimal value for each instance. (Note the logarithmic scale on the vertical axis.) The majority of the instances had an optimality gap of $10^{-6}$ or smaller; too small for the resolution of this picture.}\label{fig:polydb-gap}
\end{figure}

\subsection{Larger instances}

The second set of instances were generated in a somewhat similar fashion as those in the previous set, but the parameters were increased to test the limits of our approach (in particularly, increasing the size of the support above 500). The instances for this experiment were polynomials of degree $d=8$ with $n=25$ unknowns. The random supports and coefficients were generated in the following manner: the constant monomial and the monomials $x_i^d$ were given random integer coefficients between $1$ and $5$, then a random subset of monomials with componentwise even exponents with total degree less than $d$ were selected (without replacement) and given a random non-zero integer coefficient between $-5$ and $5$. The size of the support was varied in $5\%$ increments up to the maximum of $3301$ (the number of componentwise even $25$-variate monomials with total degree less than $d=8$).

Generating the instances in this fashion achieves the following: (1) it is clear a priori that the polynomials can be bounded from below; (2) the Newton polytope $\New(f)$ is known in advance (an $(n+1)$-simplex whose vertices correspond to the monomials $0$ and $x_1^d,\dots,x_n^d$); (3) Phase I can be skipped, and Phase II can be started with an easily computable set of circuits: every exponent in $\supp(f)\setminus V$ is the inner exponent of exactly one initial circuit whose outer exponents are appropriate vertices of the simplex Newton polytope.

Componentwise even monomials were chosen to maximize the number of circuits that can be formed by points in the support and thus make the problems more challenging. (Every exponent of the support other than the vertices of the Newton polytope can be an inner or outer monomial of a number of circuits.) One can also think of the lower bounding of these polynomials over $\R^n$ as problems of bounding polynomials $f$ of degree $4$ over the nonnegative orthant by first applying the change of variables $z_i \leftarrow w_i^2$ and then bounding the polynomial $\vw \to f(\vw^2)$ over $\R^n$.

Each experiment was replicated 10 times (that is, 10 randomly generated instances were solved for each problem size) using the same software and hardware as in the first set of experiments. Figure \ref{fig:exp2-times} shows the distribution of running times for each problem size. The running time increases fairly moderately (approximately cubically) as the number of monomials increases; it remained under 1.5 hours for every instance. To see where the increase in running time comes from, in Figure \ref{fig:exp2-factor} we plot the ratio between the number of circuits at the end of the circuit generation algorithm and the number of initial circuits, and in Figure \ref{fig:exp2-iterations} we plot the number of circuit generation iterations. The ratio appears to increase only linearly with the initial number of monomials, showing that the circuit generation algorithm is very effective in choosing the right circuits to add to the formulation out of the exponentially many circuits. (We have no theoretical explanation for this). Although the number of circuit generation iterations increases with increasing problem sizes (as expected), this increase is very slow (clearly sublinear); most instances were solved in fewer than 8 iterations. Figure \ref{fig:exp2-trajectories} shows the evolution of the number of circuits for each instance as the algorithm progresses.

\begin{figure}
	\centering
	\includegraphics[width=0.95\textwidth]{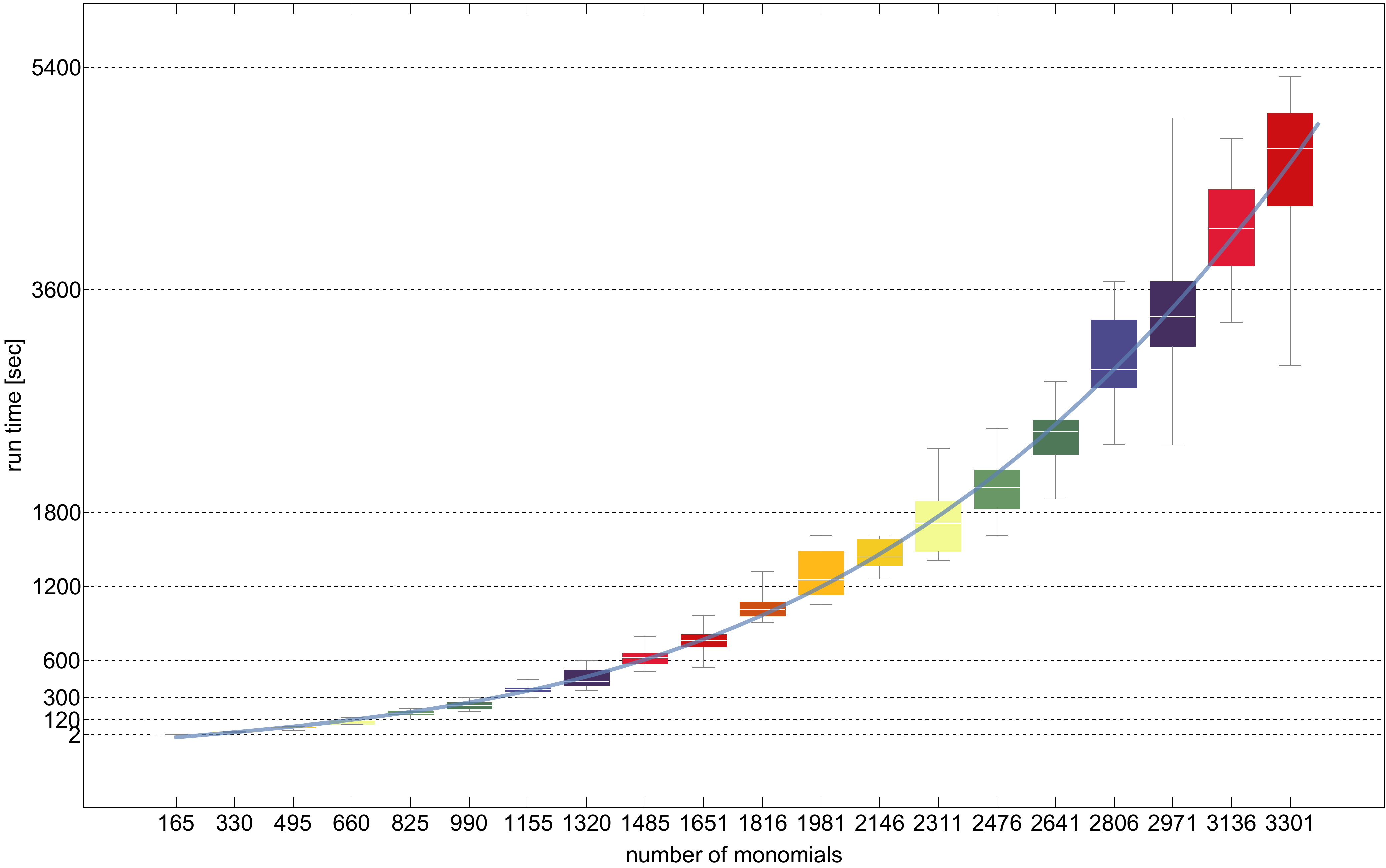}\caption{Box-whisker plot of total running times as a function of problem size from the second experiment. Problem size (horizontal axis) is measured by the number of monomials. Each box represents results from 10 experiments with random polynomials of the same size. A cubic function fitted to the mean values is also shown.}\label{fig:exp2-times}
\end{figure}

\begin{figure}
	\centering
	\includegraphics[width=0.95\textwidth]{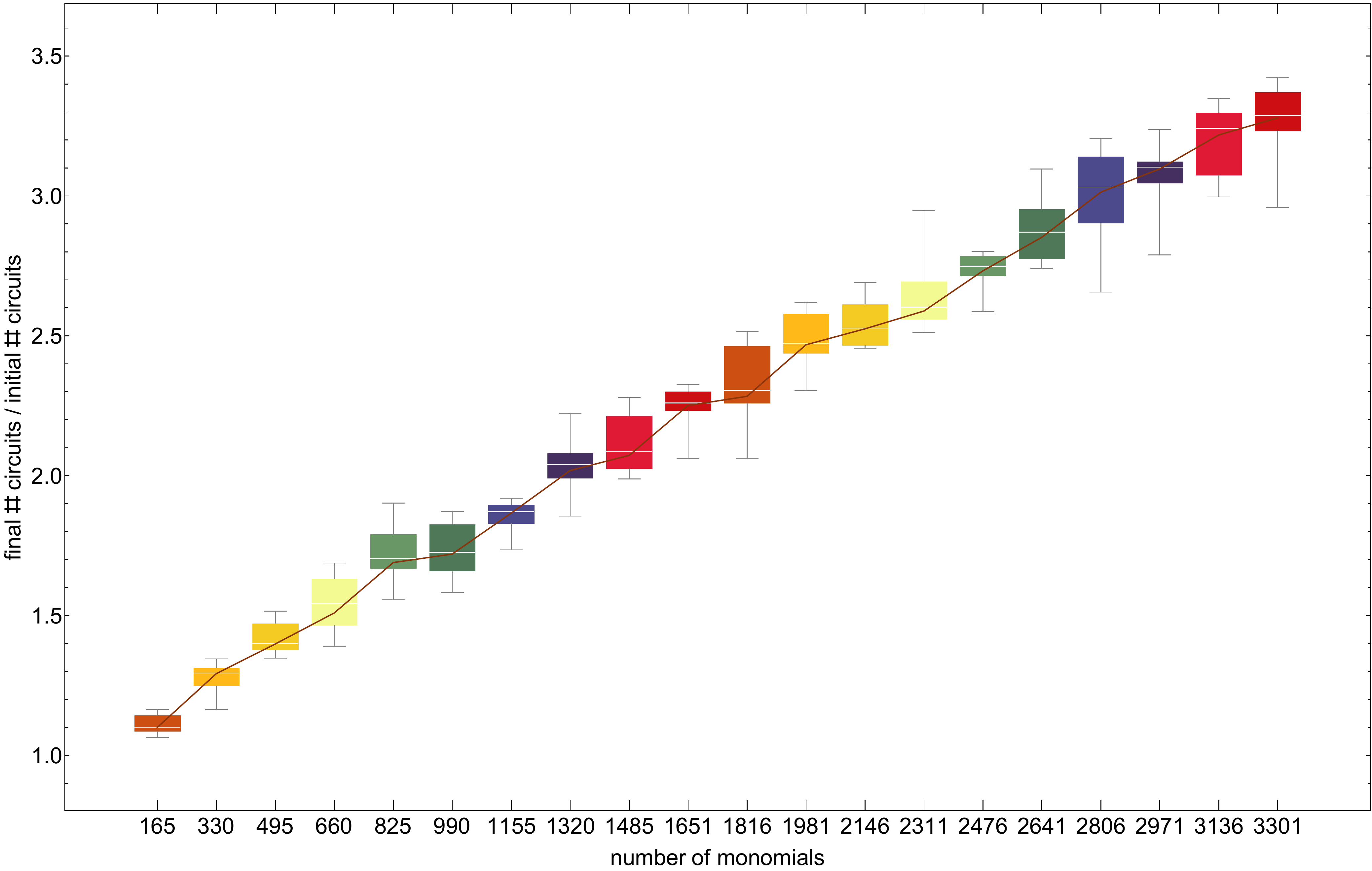}\caption{Box-whisker plot of the ratio between the final number of circuits and the initial number of circuits as a function of problem size from the second experiment. Problem size (horizontal axis) is measured by the number of monomials. Each box represents results from 10 experiments with random polynomials of the same size. The ratio appears to increase only linearly.}\label{fig:exp2-factor}
\end{figure}

\begin{figure}
	\centering
	\includegraphics[width=0.95\textwidth]{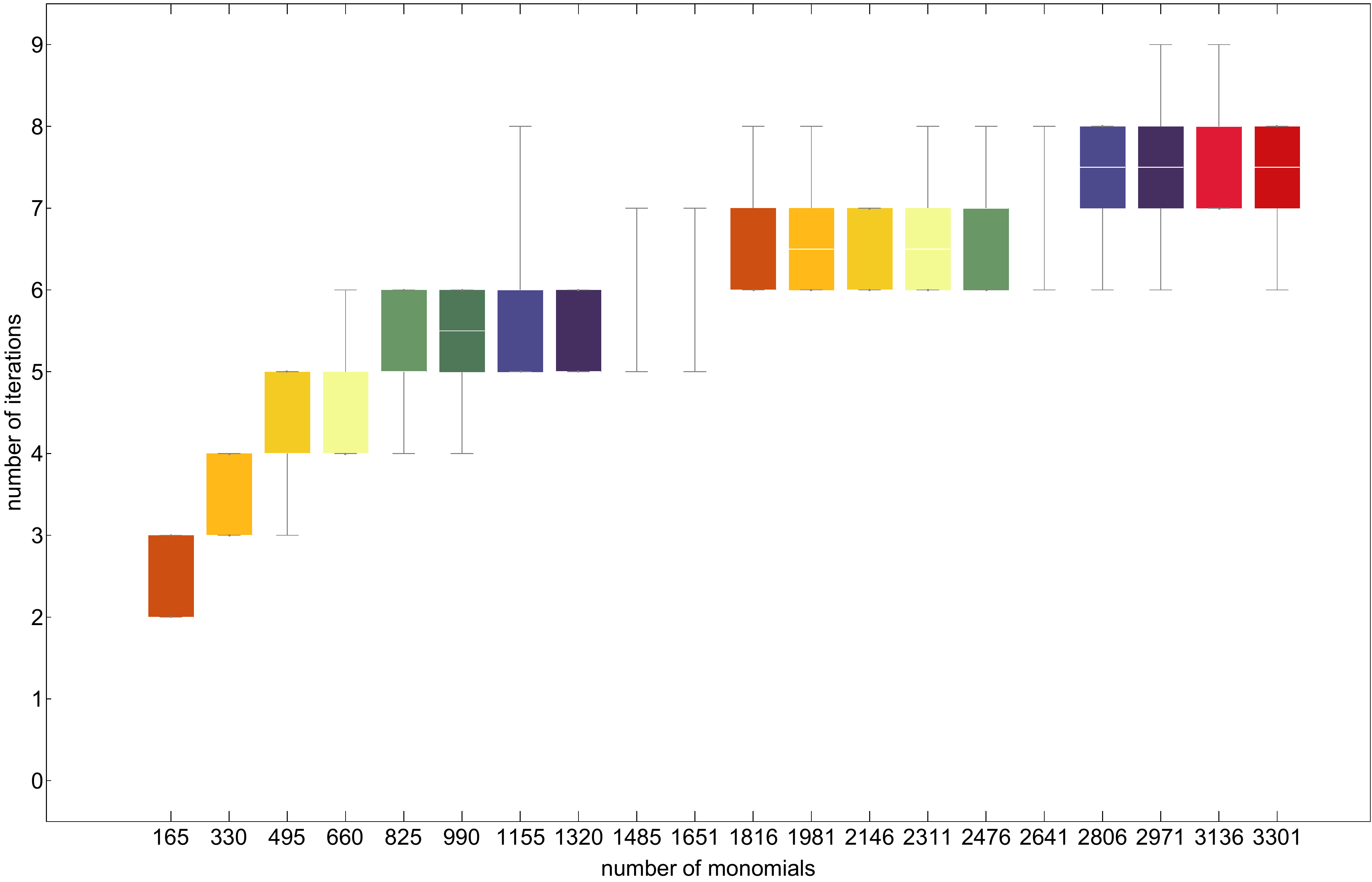}\caption{Box-whisker plot of the number of circuit generation iterations as a function of problem size from the second experiment. Problem size (horizontal axis) is measured by the number of monomials. Each box represents results from 10 experiments with random polynomials of the same size. The ratio appears to increase very slowly (sublinearly).}\label{fig:exp2-iterations}
\end{figure}

\begin{figure}
	\centering
	\includegraphics[width=0.95\textwidth]{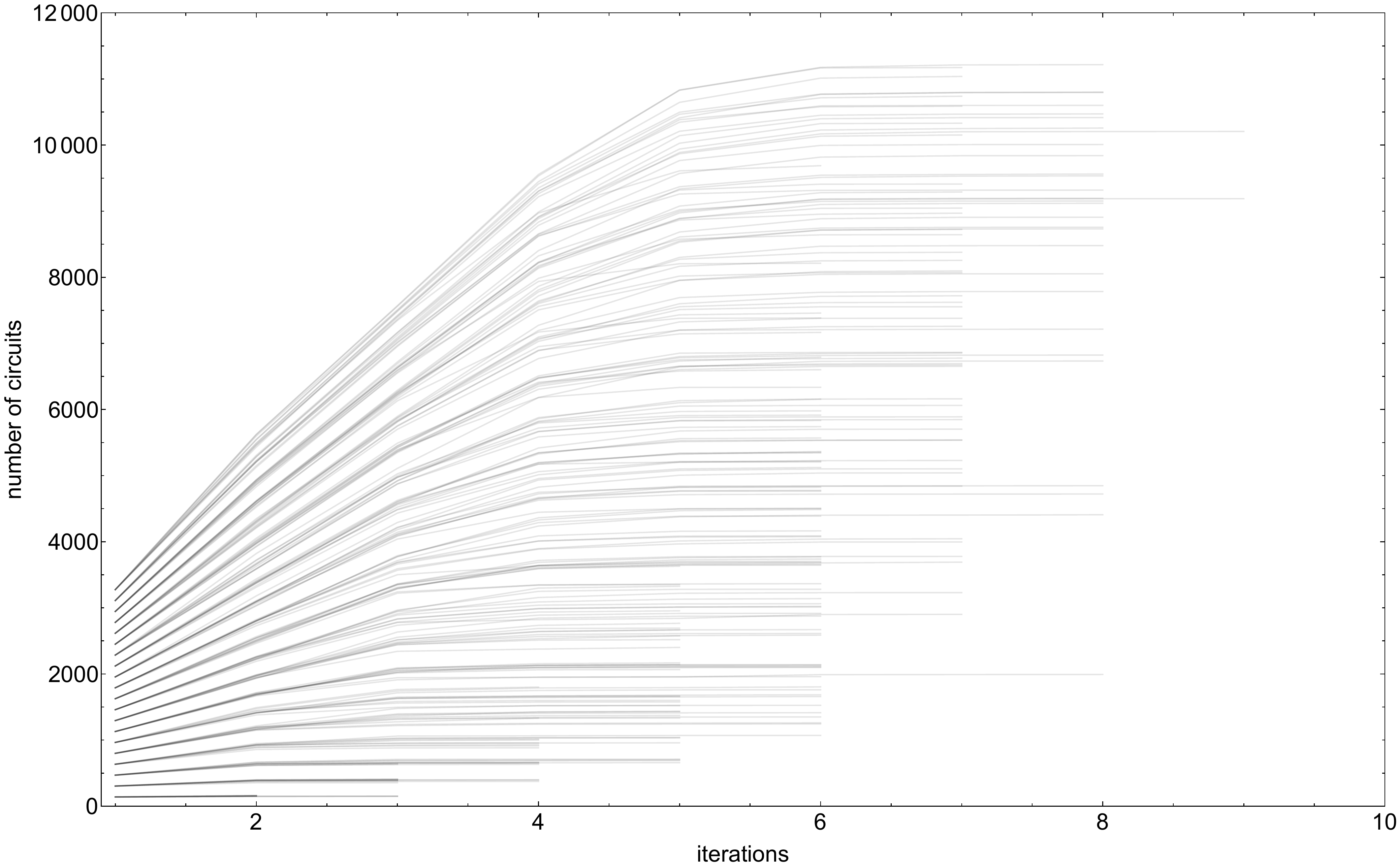}\caption{Diagram showing the number of circuits in each iteration for each instance of the second experiment. Most circuits are added in the first few iterations of the algorithm, in which a new circuit is added for nearly each monomial; later iterations add circuits more selectively. For most instances, several iterations add only a very small number of circuits. The objective function values (not shown) also reveal that in these iterations the bound often does not improve, but the promising circuits need to be added in order to certify the optimality of the bound.}\label{fig:exp2-trajectories}
\end{figure}

\section{Discussion} \label{sec:discussion}

The computational results confirm that the proposed approach is well-suited for bounding sparse polynomials even when the number of unknowns and the degree are fairly large. Theoretically, the primary driver of the running time is the size of the support, which determines the number of circuits required for an optimal SONC decomposition. The number of circuit generation iterations also appears to depend on the support size, but this dependence was surprisingly mild in all the experiments. (This does not have an apparent theoretical support, but is in line with our experience with column generation approaches in other settings.) Additionally, the dimension of the power cones (and dual power cones) may depend on the number of unknowns, since each circuit may have up $n+1$ outer exponents for polynomials with $n$ unknowns. However, assuming that the support size and the number of unknowns are fixed, the degree of the polynomials does not have an additional impact on the time complexity of the algorithm.

The second phase of the circuit generation approach finds the optimal SONC bound (and the corresponding circuits and SONC decomposition) once a SONC bound is known to exist from Phase I. The first phase, however, does something slightly weaker than certifying the existence or non-existence of a SONC bound: it finds circuits to prove a target lower bound if possible; in other words, for a given polynomial $f$ and constant $c$, it can decide whether $f+c$ is SONC or not. If it is, it finds a SONC decomposition of $f+c$, if it is not, it finds a (numerical) certificate of $f+c$ being outside of SONC. This theoretical gap cannot be closed with a numerical method: we cannot certify the non-existence of SONC bounds in general, since the set of polynomials with a finite SONC lower bound is not closed. For instance, $f_\varepsilon(\vz) \defeq (1+\varepsilon)z_1^2 - 2z_1z_2 + z_2^2 - 2z_1$ has a SONC lower bound for every $\varepsilon>0$ (because $f_\varepsilon+1/\varepsilon$ is SONC) but $f_0$ does not have a SONC lower bound (because it is not bounded from below). Practically, this means that we can run the first phase with a ``large'' value of $c$ and either conclude that a ``useful'' SONC bound not does not exist (because $f+c$ is not SONC) or that $f$ has a SONC lower bound (greater than $-c$); in the latter case Phase II can compute the optimal SONC lower bound.

There are many possible extensions of the algorithm proposed in this paper. The theoretically most straightforward one is to apply the same principle to general  optimization problems in which the nonnegativity of an unknown polynomial appears as a constraint. Replacing the nonnegativity constraint with a SONC constraint, this leads to optimization problems similar to the ones we considered, except that every coefficient of the polynomials in question becomes a decision variable (rather than only the constant term being an optimization variable), and the problem may have additional optimization variables. A circuit generation procedure can be derived entirely analogously for problems of this type as long as the additional optimization variables are related to the coefficients of the SONC polynomials through linear constraints.

One may also use this approach to generate circuits for an optimal decomposition of a polynomial into the sum of a SONC polynomial and a sum-of-squares (SOS) polynomial. Theoretically, neither the SOS nor the SONC bound is always better than the other (bivariate counterexamples are easy to find); a combined SOS+SONC bound would of course be at least as good as either of them. This is not a straightforward computational problem, however, because SOS bounds are typically computed using semidefinite programming algorithms, using software that cannot handle the power cone constraints used in our algorithm. However, the primal-dual algorithm and software used in this paper (\texttt{alfonso}) was also used earlier to efficiently compute SOS bounds for polynomials \cite{PappYildiz2019}, implying that the same code could also be used to compute SOS+SONC bounds. The most recent version (version 9) of the commercial conic optimization software Mosek \cite{mosek9} also supports the simultaneous use of semidefinite and power cone constraints.

Should the number of circuits generated by the algorithm become prohibitively large, one may consider an improved version of Algorithm~\ref{alg:CG} which does not only add new promising circuits but also attempts to remove the unnecessary ones in each iteration. This problem did not arise in our experiments (the number of circuits never increased above 10 times the number of circuits used in the optimal SONC decomposition), hence we did not pursue this direction in the paper. We note however that dropping all circuits not used in the last iteration may lead to cycling (the same circuits being added again in the next iteration and than dropped again). An example of a constraint generation algorithm for convex optimization that drops unnecessary cone constraints but safeguards against cycling and could likely be adapted to our problem is \cite{MehrotraPapp2014}.

Lastly, we leave it for future work to implement an extension of the proposed method to a hybrid symbolic-numerical method that generates rigorous global lower bounds and certificates that can be verified in exact arithmetic from the numerical SONC decompositions computed by our algorithm. Since the numerical method used in our implementation is a primal-dual interior-point approach that computes a \emph{strictly interior} feasible solution $\vy^*$ to the problem \eqref{eq:bound-dual}, it is a trivial matter to compute a nearby rational feasible solution $\vy_{\text{rat}}$ to the same problem by componentwise rounding the numerical vector $\vy^*$ to a close enough rational vector without violating any of the cone constraints. Finally, the problem's only equality constraint can be satisfied exactly by scaling $\vy_{\text{rat}}$ (although this does leave a square root in the final symbolic solution). The resulting dual objective function value $-\vf^\T\vy_{\text{rat}}$ is a rigorous global lower bound on $f$, close to the numerically obtained bound, whose correctness can be verified in exact arithmetic by verifying the strict feasibility of $\vy_\text{rat}$. The reconstruction of a primal certificate, that is, a verifiable exact SONC decomposition by computing a rational feasible solution $(p_1,\dots,p_N)$ of the primal problem \eqref{eq:bound-B} from the near-optimal, and only near-feasible, numerical solution is a more complicated matter.

\deletethis{
???
The model in the proof of Lemma \ref{thm:bound-attainment} has a Slater point under all sorts of conditions (though not WLOG). Could its dual \eqref{eq:bound-dual} always have attainment???
}

\section*{Acknowledgments}
The author is grateful to Mareike Dressler (UCSD) for pointing out the reference to Jie Wang's recent work \cite{Wang2019} on the support of SONC polynomials.

\bibliographystyle{siamplain}
\bibliography{circuit_generation}

\end{document}